\newtheorem{definition}[equation]{Definition}
\newtheorem{lemma}[equation]{Lemma}
\newtheorem{proposition}[equation]{Proposition}
\newtheorem{theorem}[equation]{Theorem}
\newtheorem{corollary}[equation]{Corollary}
\newcommand\lemmaref[1]{Lemma~\ref{#1}}
\newcommand\theoremref[1]{Theorem~\ref{#1}}
\newcommand\corollaryref[1]{Corollary~\ref{#1}}
\title{Classification of Klein Four Symmetric Pairs of Holomorphic Type for $\mathrm{E}_{7(-25)}$}
\author{Haian HE}
\date{}
\address{Department of Mathematics,
College of Sciences, Shanghai University,
No.\ 99 Shangda Road, Baoshan District,
Shanghai, China P.\ R.\ 200444}
\email{hebe.hsinchu@yahoo.com.tw}
\subjclass[2010]{17B10; 22E46}
\keywords{admissible representation; holomorphic type; Klein four symmetric pair; reductive Lie group}
\begin{document}
\begin{abstract}
The author classifies Klein four symmetric pairs of holomorphic type for the non-compact Lie group of Hermitian type $\mathrm{E}_{7(-25)}$, and applies the results to branching rules.
\end{abstract}
\maketitle
\section{Introduction}
\subsection{Notations}
Denote by $\mathbb{R}$ and $\mathbb{C}$ the real field and the complex field respectively. For any positive integer $n$, denote by $I_n$ the $n\times n$ identity matrix. For a Lie group $G$ with its Lie algebra $\mathfrak{g}$, denote by $\mathrm{Aut}G$ and $\mathrm{Aut}\mathfrak{g}$ the automorphism group of $G$ and $\mathfrak{g}$ respectively. Also, denote by $\mathrm{Int}\mathfrak{g}$ the subgroup of $\mathrm{Aut}\mathfrak{g}$, which contains all the inner automorphisms of $\mathfrak{g}$. For a subgroup $H$ of $\mathrm{Aut}G$, write $G^H:=\{g\in G\mid f(g)=g\textrm{ for all }f\in H\}$. Similarly, for a subgroup $H$ of $\mathrm{Aut}\mathfrak{g}$, write $\mathfrak{g}^H:=\{X\in\mathfrak{g}\mid f(X)=X\textrm{ for all }f\in H\}$. Moreover, if two elements $g_1$ and $g_2$ in the group $G$ are conjugate by an element in the subgroup $H$ of $G$, i.e., $g_2=h^{-1}g_1h$ for some $h\in H$, then write $g_1\sim_Hg_2$; else, write $g_1\nsim_Hg_2$. If $g_1,g_2,\cdots,g_n$ are $n$ elements in a group $G$, then denote by $\langle g_1,g_2,\cdots,g_n\rangle$ the subgroup of $G$ generated by $g_1,g_2,\cdots,g_n$.
\subsection{Outlines}
Symmetric pairs were classified by \'{E}lie Joseph CARTAN and Marcel BERGER in \cite{C1}, \cite{C2}, and \cite{B}. Jingsong HUANG and Jun YU studied semisimple symmetric spaces from a different point of view in \cite{HY}; that is, by determining the Klein four subgroups in the automorphism groups of compact Lie algebras. As an extension of the work of \cite{HY}, Jun YU classified all the elementary abelian 2-groups in the automorphism groups of compact Lie algebras, where groups of rank 2 are just Klein four subgroups. However, the classification of Klein four subgroups in the automorphism groups of non-compact simple Lie algebras is not complete. In \cite{H}, the author classified the Klein four symmetric pairs of holomorphic type for the simple Lie group $\mathrm{E}_{6(-14)}$. In this article, the author studies the Klein four symmetric pairs of holomorphic type for the simple Lie group $\mathrm{E}_{7(-25)}:=\mathrm{Aut}\mathfrak{e}_{7(-25)}$.

The motivation to study the Klein four symmetric pairs of holomorphic type comes from the branching problem raised by Toshiyuki KOBAYASHI in \cite[Problem 5.6]{Ko6}. Concretely, one may want to classify the pairs $(G,G')$ of real reductive Lie groups satisfying the following condition: there exist an infinite dimensional irreducible unitary representation $\pi$ of $G$ and an irreducible unitary representation $\tau$ of $G'$ such that\[0<\dim_\mathbb{C}\mathrm{Hom}_{\mathfrak{g}',K'}(\tau_{K'},\pi_K|_{\mathfrak{g}'})<+\infty\]where $K$ is a maximal compact subgroup of $G$, $\mathfrak{g}$ is the complexified Lie algebra of $G$, $\pi_K|_{\mathfrak{g}'}$ is the restriction of underlying $(\mathfrak{g},K)$-module of $\pi$ to the complexified Lie algebra $\mathfrak{g}'$ of $G'$, $K'$ is a maximal compact subgroup of $G'$ such that $K'\subseteq G'\cap K$, and $\tau_{K'}$ is the underlying $(\mathfrak{g}',K')$-module of $\tau$. For symmetric pairs $(G,G')$, the problem was solved in \cite[Theorem 5.2]{KO}. Unfortunately, it is unknown for general cases, even for Klein four symmetric pairs.

A sufficient condition of it is that $\pi$ is $K'$-admissible by \cite[Proposition 1.6]{Ko4}. For a simple Lie group of Hermitian type $G$ and its reductive subgroup $G'$, if the Lie algebra of $G'$ contains the center of the Lie algebra of a maximal compact subgroup of $G$, then it holds. This is the case when $(G,G')$ is a symmetric pair of holomorphic type classified by Toshiyuki KOBAYASHI and Yoshiki \={O}SHIMA in \cite{KO}, or a symmetric pair of holomorphic type defined in \cite{H} where the author classified the Klein four symmetric pairs of holomorphic type for $\mathrm{E}_{6(-14)}$. In this article, the author classifies Klein four symmetric pairs of holomorphic type for $G=\mathrm{E}_{7(-25)}$, and thus finishes the classification of Klein four symmetric pairs of holomorphic type for exceptional Lie groups of Hermitian type.

The article is organized as follows. At the beginning, the author briefly recalls the definition of Klein four symmetric pairs (of holomorphic type), and the $K'$-admissibility of the restrictions of highest weight representations. After that, the author classifies Klein four symmetric pairs of holomorphic type for $\mathfrak{e}_{7(-25)}$, and applies the results to branching rules.
\section{Preliminary}
\subsection{Klein four symmetric pairs (of holomorphic type)}
The aim of this part is to recall the definition of Klein four symmetric pairs of holomorphic type.
\begin{definition}\label{14}
Let $G$ (respectively, $\mathfrak{g}_0$) be a real simple Lie group (respectively, Lie algebra), and let $\Gamma$ be a Klein four subgroup of $\mathrm{Aut}G$ (respectively, $\mathrm{Aut}\mathfrak{g}_0$). Then $(G,G^\Gamma)$ (respectively, $(\mathfrak{g}_0,\mathfrak{g}_0^\Gamma)$) is called a Klein four symmetric pair. Two Klein four symmetric pairs $(G_1,G'_1)$ (respectively, $(\mathfrak{g}_1,\mathfrak{g}'_1)$) and $(G_2,G'_2)$ (respectively, $(\mathfrak{g}_2,\mathfrak{g}'_2)$) are said to be isomorphic if there exists a Lie group (respectively, Lie algebra) isomorphism $f:G_1\rightarrow G_2$ (respectively, $f:\mathfrak{g}_1\rightarrow\mathfrak{g}_2$) such that $f(G'_1)=G'_2$ (respectively, $f(\mathfrak{g}'_1)=\mathfrak{g}'_2$).
\end{definition}
Suppose that $G$ is a real simple Lie group of Hermitian type; that is, $G/K$ carries a structure of a Hermitian symmetric space where $K$ is a maximal compact subgroup of $G$. Equivalently, the center $Z(\mathfrak{k}_0)$ of Lie algebra $\mathfrak{k}_0$ of $K$ has dimension 1. Take a Cartan involution $\theta$ for $G$, which defines $K$. Let $\tau$ be an involutive automorphism of $G$, which commutes with $\theta$. Use the same letter $\tau$ to denote its differential, and then $\tau$ stabilizes $\mathfrak{k}_0$ and also the center $Z(\mathfrak{k}_0)=\mathbb{R}Z$. Because $\tau^2=1$, there are two possibilities: $\tau Z=Z$ or $\tau Z=-Z$. Recall \cite[Definition 1.4]{Ko5} that the symmetric pair $(G,G^\tau)$ or $(\mathfrak{g}_0,\mathfrak{g}_0^\tau)$ is said to be of holomorphic (respectively, anti-holomorphic) type if $\tau Z=Z$ (respectively, $\tau Z=-Z$), in which case $\tau$ may be said to be of holomorphic (respectively, anti-holomorphic) type for convenience.

According to the classification of the symmetric pairs of holomorphic type and anti-holomorphic type in \cite[Table 3.4.1 \& Table 3,4,2]{Ko5}, there does not exist a symmetric pair which is of both holomorphic type and anti-holomorphic type, so the author remarks that whether a symmetric pair is of holomorphic type or anti-holomorphic type does not depend on the choice of the maximal compact subgroup which is stable under the the action of the involutive automorphism defining the symmetric pair.
\begin{definition}\label{16}
Suppose that $G$ (respectively, $\mathfrak{g}_0$) is a real simple Lie group (respectively, Lie algebra) of Hermitian type. Let $\Gamma=\langle\tau,\sigma\rangle$ be a Klein four subgroup of $\mathrm{Aut}G$ (respectively, $\mathrm{Aut}\mathfrak{g}_0$) generated by $\tau$ and $\sigma$. If both $\tau$ and $\sigma$ are of holomorphic type, then $(G,G^\Gamma)$ (respectively, $(\mathfrak{g}_0,\mathfrak{g}_0^\Gamma)$) is called a Klein four symmetric pair of holomorphic type.
\end{definition}
\subsection{Restrictions of highest weight representations}
For a simple Lie group $G$ of Hermitian type with a maximal compact subgroup $K$, its Lie algebra $\mathfrak{g}_0$ satisfies the condition that a Cartan subalgebra $\mathfrak{h}_0$ of $\mathfrak{k}_0$ becomes a Cartan subalgebra of $\mathfrak{g}_0$. Moreover, there exists a characteristic element $Z\in Z(\mathfrak{k}_0)$ such that $\mathfrak{g}=\mathfrak{l}\oplus\mathfrak{p}_+\oplus\mathfrak{p}_-$ is a decomposition with respect to the eigenspaces of $Z$ on the complexified Lie algebra $\mathfrak{g}$ corresponding to the eigenvalue 0, $\sqrt{-1}$, and $-\sqrt{-1}$ respectively. Similarly, remove the subscript, and then $\mathfrak{h}$ denotes the complexification of $\mathfrak{h}_0$. Choose a positive system $\Phi^+$ for $(\mathfrak{g},\mathfrak{h})$ and its simple system $\Delta$. Denote by $\Phi_\mathfrak{k}^+$ the set of the positive roots of $(\mathfrak{k},\mathfrak{h})$, and write $\Phi_\mathfrak{p}^+:=\Phi^+\setminus\Phi_\mathfrak{k}^+$. Set $\Delta_\mathfrak{k}:=\Phi_\mathfrak{k}^+\cap\Delta$ and $\Delta_\mathfrak{p}:=\Delta\setminus\Delta_\mathfrak{k}$. Obviously, $\Delta_\mathfrak{p}$ contains exactly 1 element because $Z(\mathfrak{k})$ has complex dimension 1.

Suppose that $V$ is a simple $(\mathfrak{g},K)$-module, and then set $V^{\mathfrak{p}_+}=\{v\in V\mid Yv=0\textrm{ for any }Y\in\mathfrak{p}^+\}$. Since $K$ normalizes $\mathfrak{p}_+$, $V^{\mathfrak{p}_+}$ is a $K$-submodule. Further, $V^{\mathfrak{p}_+}$ is either zero or an irreducible finite-dimensional representation of $K$. A $(\mathfrak{g},K)$-module $V$ is called a highest weight module if $V^{\mathfrak{p}_+}\neq\{0\}$.

Recall the definition of unitary highest weight representations. If $\pi$ is a unitary representation of $G$ on a Hilbert space $\mathcal{H}$, and $\mathcal{H}_K$ is the underlying $(\mathfrak{g},K)$-module, then $\pi$ is called a unitary highest weight representation if $\mathcal{H}_K^{\mathfrak{p}_+}\neq\{0\}$; namely, $\mathcal{H}_K$ is a highest weight $(\mathfrak{g},K)$-module.
\begin{proposition}\label{13}
Let $G$ be a real simple Lie group of Hermitian type, and $G'$ a reductive subgroup of $G$. Let $K$ be a maximal compact subgroup of $G$ such that $K':=G'\cap K$ is a maximal compact subgroup of $G'$. Suppose that the Lie algebra $\mathfrak{g}'$ of $G'$ contains the center $Z(\mathfrak{k}_0)$ of the Lie algebra $\mathfrak{k}_0$ of $K$. If $\pi$ is an irreducible unitary highest weight representation of $G$, the restriction $\pi\mid_{G'}$ is $K'$-admissible.
\end{proposition}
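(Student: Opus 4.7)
The plan is to exploit the discrete spectrum of the characteristic element $Z\in Z(\mathfrak{k}_0)$ acting on the underlying $(\mathfrak{g},K)$-module $\mathcal{H}_K$ of $\pi$, and then to deduce $K'$-admissibility by a Schur's lemma argument for $K'$. Concretely, set $V:=\mathcal{H}_K^{\mathfrak{p}_+}$, which by assumption is a nonzero finite-dimensional irreducible $K$-submodule of $\mathcal{H}_K$. By the Poincar\'e--Birkhoff--Witt theorem applied to the decomposition $\mathfrak{g}=\mathfrak{l}\oplus\mathfrak{p}_+\oplus\mathfrak{p}_-$ one has $\mathcal{H}_K=U(\mathfrak{p}_-)V$. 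Since $Z$ is central in $\mathfrak{k}_0$, Schur's lemma yields $Z|_V=c\cdot\mathrm{id}_V$ for some scalar $c$, and from the bracket relation $[Z,Y]=-\sqrt{-1}Y$ for $Y\in\mathfrak{p}_-$ a degree induction shows that the image of $S^n(\mathfrak{p}_-)\otimes V$ in $\mathcal{H}_K$ lies in the $(c-n\sqrt{-1})$-eigenspace of $Z$. Consequently $\mathcal{H}_K$ decomposes as a direct sum of $Z$-eigenspaces indexed by the discrete set $\{c-n\sqrt{-1}\mid n\in\mathbb{Z}_{\geq 0}\}$, each eigenspace being finite-dimensional since it is a quotient of the finite-dimensional space $S^n(\mathfrak{p}_-)\otimes V$.

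Because $K$ centralises $Z$, each such eigenspace is $K$-stable and hence $K'$-stable. The hypothesis $Z(\mathfrak{k}_0)\subseteq\mathfrak{g}'$ together with $Z\in\mathfrak{k}_0$ gives $Z\in\mathfrak{g}'\cap\mathfrak{k}_0=\mathfrak{k}'_0$, and indeed $Z$ lies in the centre of $\mathfrak{k}'_0$. Applying Schur's lemma now to $K'$, the element $Z$ acts as a scalar on every irreducible $K'$-representation; therefore every irreducible $K'$-type occurring in $\mathcal{H}_K$ is confined to a single $Z$-eigenspace, and its multiplicity is bounded above by the (finite) dimension of that eigenspace. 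This yields the required $K'$-admissibility of $\pi|_{G'}$.

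I do not anticipate any substantive technical obstacle: once $\mathcal{H}_K=U(\mathfrak{p}_-)V$ and the semisimple action of $Z$ are in place, every ingredient is standard. The only points deserving care are that $Z$ genuinely lies in $\mathfrak{k}'_0$ (which uses both $Z\in\mathfrak{k}_0$ and the hypothesis on $\mathfrak{g}'$), that the $Z$-eigenspaces in $\mathcal{H}_K$ are $K'$-stable, and the routine passage from the multiplicity bound on the $(\mathfrak{g},K)$-module $\mathcal{H}_K$ to $K'$-admissibility of $\pi|_{G'}$ on the Hilbert space $\mathcal{H}$, which is standard because every $K$-finite vector is automatically $K'$-finite.
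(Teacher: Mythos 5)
Your proof is correct, but it takes a genuinely different route from the paper's. The paper disposes of the proposition by citation: it invokes Kobayashi's general admissibility criterion \cite[Theorem 2.9 (1)]{Ko3}, which gives $K'$-admissibility whenever the asymptotic $K$-support $\mathrm{AS}_K(\pi)$ meets the cone attached to $K'$ only at the origin, together with \cite[Example 2.13]{Ko3} and \cite[Example 3.3]{Ko3}, which compute $\mathrm{AS}_K(\pi)$ for highest weight modules and verify the cone condition when $\mathfrak{g}'\supseteq Z(\mathfrak{k}_0)$. You instead give a self-contained elementary argument: $\mathcal{H}_K=U(\mathfrak{p}_-)V$ splits into the images of the $S^n(\mathfrak{p}_-)\otimes V$, which lie in distinct $Z$-eigenspaces with eigenvalues $c-n\sqrt{-1}$ (so the sum is direct and each eigenspace is exactly one such image, hence finite-dimensional); these eigenspaces are $K'$-stable; and Schur's lemma pins each irreducible $K'$-type to a single eigenspace, bounding its multiplicity. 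This is essentially Harish-Chandra's original observation and is the content of Kobayashi's Example 3.3, so the two arguments establish the same fact; yours has the merit of being checkable without the microlocal machinery, while the paper's citation embeds the proposition in a criterion that also covers representations that are not of highest weight type. Two small points you should make explicit: (a) Schur's lemma gives that $Z$ acts by a scalar on an irreducible $K'$-module only once you know $\mathrm{Ad}(k)Z=Z$ for all $k\in K'$, not merely that $Z$ is central in $\mathfrak{k}'_0$ (which only handles the identity component of $K'$); this does hold here, since $\mathrm{Ad}(K)$ preserves the line $\mathbb{R}Z$ and cannot send $Z$ to $-Z$ without reversing the invariant complex structure on $G/K$. (b) The passage from the finite multiplicity bound on $\mathcal{H}_K$ to $K'$-admissibility of $\pi|_{G'}$ should be phrased as: $\mathcal{H}$ is the Hilbert completion of an algebraic direct sum of finite-dimensional $K'$-invariant subspaces, hence decomposes discretely over $K'$ with the multiplicities you bounded. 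Neither point is a gap, only a matter of stating what is being used.
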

\begin{proof}
The conclusion follows from \cite[Theorem 2.9 (1)]{Ko3}, \cite[Example 2.13]{Ko3}, and \cite[Example 3.3]{Ko3}, a brief argument of which is shown in \cite[Proposition 6]{H}.
\end{proof}
\begin{corollary}\label{17}
If $(G,G^{\langle\tau,\sigma\rangle})$ is a Klein four symmetric pair of holomorphic type, then any unitary highest weight representation $\pi$ of $G$ is $K'$-admissible.
\end{corollary}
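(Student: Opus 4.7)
The plan is to reduce the corollary to a direct application of \propositionref{13} by verifying its hypotheses for the subgroup $G' := G^{\langle\tau,\sigma\rangle}$. The first step is to choose a maximal compact subgroup $K \subseteq G$ that is stable under the Klein four group $\Gamma = \langle\tau,\sigma\rangle$. Since $\tau$ and $\sigma$ are commuting involutive automorphisms of $G$, a standard simultaneous-Cartan-involution argument produces a Cartan involution $\theta$ of $G$ commuting with both $\tau$ and $\sigma$; the corresponding $K$ is then preserved by $\Gamma$, and standard fixed-point theory for reductive groups gives that $G' = G^\Gamma$ is a reductive subgroup with $K' := G' \cap K = K^\Gamma$ a maximal compact subgroup of $G'$.

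The second step is the key hypothesis of \propositionref{13}, namely that the complexified Lie algebra $\mathfrak{g}'$ contains $Z(\mathfrak{k}_0)$. Here is where the holomorphic-type assumption enters crucially: because $\tau$ is of holomorphic type, the discussion following \definitionref{14} gives $\tau Z = Z$ for a generator $Z$ of the one-dimensional center $Z(\mathfrak{k}_0) = \mathbb{R}Z$; the same holds for $\sigma$. Therefore $Z \in \mathfrak{g}_0^\Gamma \subseteq \mathfrak{g}'$, which yields $Z(\mathfrak{k}_0) \subseteq \mathfrak{g}'$ as required.

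With these verifications in hand, the final step is immediate: invoke \propositionref{13} to conclude that the restriction $\pi|_{G'}$ is $K'$-admissible for every irreducible unitary highest weight representation $\pi$ of $G$.

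I expect the main, if minor, obstacle to be the first step, that is, giving a clean justification for the simultaneous compatibility of $\theta$ with both $\tau$ and $\sigma$ and for the fact that $K^\Gamma$ is a maximal compact subgroup of $G^\Gamma$; both are well known but must be cited or sketched carefully because the subgroup is cut out by a Klein four group rather than a single involution. Once this is dispatched, the remainder of the proof is essentially a single line quoting \propositionref{13}.
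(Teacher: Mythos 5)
Your proposal is correct and follows essentially the same route as the paper: the paper's proof is just the citation ``See \cite[Corollary 7]{H}'', and that cited corollary is established by exactly your reduction --- choose a Cartan involution centralizing $\langle\tau,\sigma\rangle$ (cf.\ \cite[Proposition 2]{H}, quoted in Section 3.3), observe that the holomorphic-type hypothesis forces $\tau Z=\sigma Z=Z$ so that $Z(\mathfrak{k}_0)\subseteq\mathfrak{g}'$, and apply \propositionref{13}.
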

\begin{proof}
See \cite[Corollary 7]{H}.
\end{proof}
\section{Klein Four Symmetric pairs of holomorphic type for $\mathrm{E}_{7(-25)}$}
\subsection{Elementary abelian 2-subgroups in $\mathrm{Aut}\mathfrak{e}_{6(-78)}$}
The aim of this part is to recall the elementary abelian 2-subgroups in $\mathrm{Aut}\mathfrak{e}_{6(-78)}$, which is helpful to study $\mathrm{E}_{7(-25)}$. Let $\mathfrak{e}_6$ be the complex simple Lie algebra of type $\mathrm{E}_6$. Fix a Cartan subalgebra of $\mathfrak{e}_6$ and a simple root system $\{\alpha_i\mid1\leq i\leq6\}$, the Dynkin diagram of which is given in Figure 1.
\begin{figure}
\centering \scalebox{0.7}{\includegraphics{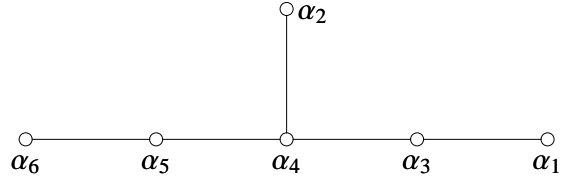}}
\caption{Dynkin diagram of $\mathrm{E}_6$.}
\end{figure}
For each root $\alpha$, denote by $H_\alpha$ its coroot, and denote by $X_\alpha$ the normalized root vector so that $[X_\alpha,X_{-\alpha}]=H_\alpha$. Moreover, one can normalize $X_\alpha$ appropriately such that\[\mathrm{Span}_\mathbb{R}\{X_\alpha-X_{-\alpha},\sqrt{-1}(X_\alpha+X_{-\alpha}),\sqrt{-1}H_\alpha\mid\alpha:\textrm{positive root}\}\cong\mathfrak{e}_{6(-78)}\]is a compact real form of $\mathfrak{g}$ by \cite{Kn}. It is well known that\[\mathrm{Aut}\mathfrak{e}_{6(-78)}/\mathrm{Int}\mathfrak{e}_{6(-78)}\cong\mathrm{Aut}\mathfrak{e}_6/\mathrm{Int}\mathfrak{e}_6\]which is just the automorphism group of the Dynkin diagram.

Follow the constructions of involutive automorphisms of $\mathfrak{e}_{6(-78)}$ in \cite{HY}. Let $\omega$ be the specific involutive automorphism of the Dynkin diagram defined by
\begin{eqnarray*}
\begin{array}{rclcrcl}
\omega(H_{\alpha_1})=H_{\alpha_6},&&\omega(X_{\pm\alpha_1})=X_{\pm\alpha_6},\\
\omega(H_{\alpha_2})=H_{\alpha_2},&&\omega(X_{\pm\alpha_2})=X_{\pm\alpha_2},\\
\omega(H_{\alpha_3})=H_{\alpha_5},&&\omega(X_{\pm\alpha_3})=X_{\pm\alpha_5},\\
\omega(H_{\alpha_4})=H_{\alpha_4},&&\omega(X_{\pm\alpha_4})=X_{\pm\alpha_4}.
\end{array}
\end{eqnarray*}
Let $\tau_1=\mathrm{exp}(\sqrt{-1}\pi H_{\alpha_2})$, $\tau_2=\mathrm{exp}(\sqrt{-1}\pi(H_{\alpha_1}+H_{\alpha_6}))$, $\tau_3=\omega$, $\tau_4=\omega\mathrm{exp}(\sqrt{-1}\pi H_{\alpha_2})$, where $\mathrm{exp}$ represents the exponential map from $\mathfrak{e}_{6(-78)}$ to $\mathrm{Aut}\mathfrak{e}_{6(-78)}$. Then $\tau_1$, $\tau_2$, $\tau_3$, and $\tau_4$ represent all conjugacy classes of involutions in $\mathrm{Aut}\mathfrak{e}_{6(-78)}$, which correspond to real forms $\mathfrak{e}_{6(2)}$, $\mathfrak{e}_{6(-14)}$, $\mathfrak{e}_{6(-26)}$, and $\mathfrak{e}_{6(6)}$.

From \cite{HY}, it is known that $(\mathrm{Int}\mathfrak{e}_{6(-78)})^{\tau_3}\cong\mathrm{F}_{4(-52)}$, the compact Lie group of type $\mathrm{F}_4$, and there exist an involutive automorphism $\eta$ of $\mathrm{F}_{4(-52)}$ such that $\mathfrak{f}_{4(-52)}^{\eta}\cong\mathfrak{sp}(3)\oplus\mathfrak{sp}(1)$, where $\mathfrak{f}_{4(-52)}$ denotes the compact Lie algebra of type $\mathrm{F}_4$. Moreover, $((\mathrm{Int}\mathfrak{e}_{6(-78)})^{\tau_3})^{\eta}\cong\mathrm{Sp}(3)\times\mathrm{Sp}(1)/\langle(-I_3,-1)\rangle$. Let $\mathbf{i}$, $\mathbf{j}$, and $\mathbf{k}$ denote the fundamental quaternion units, and then set $x_0=\tau_3$, $x_1=\eta=(I_3,-1)$, $x_2=(\mathbf{i}I_3,\mathbf{i})$, $x_3=(\mathbf{j}I_3,\mathbf{j})$, $x_4=(\left(\begin{array}{ccc}-1&0&0\\0&-1&0\\0&0&1\end{array}\right),1)$, and $x_5=(\left(\begin{array}{ccc}-1&0&0\\0&1&0\\0&0&-1\end{array}\right),1)$.

For a pair $(r,s)$ of integers with $r\leq2$ and $s\leq3$, define\[F_{r,s}:=\langle x_0,x_1,\cdots,x_s,x_4,x_5,\cdots,x_{r+3}\rangle\]and\[F'_{r,s}:=\langle x_1,x_2,\cdots,x_s,x_4,x_5,\cdots,x_{r+3}\rangle.\]

On the other hand, $(\mathrm{Int}\mathfrak{e}_{6(-78)})^{\tau_4}\cong\mathrm{Sp}(4)/\langle-I_4\rangle$. Set $y_0=\sigma_4$, $y_1=\mathbf{i}I_4$, $y_2=\mathbf{j}I_4$, $y_3=\left(\begin{array}{cc}-I_2&0\\0&I_2\end{array}\right)$, $y_4=\left(\begin{array}{cc}0&I_2\\I_2&0\end{array}\right)$, $y_5=\left(\begin{array}{cccc}1&0&0&0\\0&-1&0&0\\0&0&1&0\\0&0&0&-1\end{array}\right)$, and $y_6=\left(\begin{array}{cccc}0&1&0&0\\1&0&0&0\\0&0&0&1\\0&0&1&0\end{array}\right)$.

For a quadruplet $(u,v,r,s)$ of integers with $u+v\leq1$ and $r+s\leq2$, define\[F_{u,v,r,s}:=\langle y_0,y_1,\cdots,y_{u+2v},y_3,y_4,\cdots,y_{2+2s},y_{3+2s},y_{5+2s},\cdots,y_{1+2r+2s}\rangle\]and\[F'_{u,v,r,s}:=\langle y_1,y_2,\cdots,y_{u+2v},y_3,y_4,\cdots,y_{2+2s},y_{3+2s},y_{5+2s},\cdots,y_{1+2r+2s}\rangle.\]According to \cite[Proposition 6.3 \& Proposition 6.5]{Y}, each elementary abelian 2-subgroup in $\mathrm{Aut}\mathfrak{u}_0$ is conjugate to one of the groups in the one family of $F_{r,s}$, $F'_{r,s}$, $F_{u,v,r,s}$, and $F'_{u,v,r,s}$, and all groups in the families of $F_{r,s}$, $F'_{r,s}$, $F_{u,v,r,s}$, and $F'_{u,v,r,s}$ are pairwisely non-conjugate.
\subsection{Elementary abelian 2-subgroups in $\mathrm{Aut}\mathfrak{e}_{7(-133)}$ induced from $\mathrm{Aut}\mathfrak{e}_{6(-78)}$}
Let $\mathfrak{g}=\mathfrak{e}_7$, the complex simple Lie algebra of type $\mathrm{E}_7$. Fix a Cartan subalgebra of $\mathfrak{e}_7$ and a simple root system $\{\alpha_i\mid1\leq i\leq7\}$, the Dynkin diagram of which is given in Figure 2.
\begin{figure}
\centering \scalebox{0.7}{\includegraphics{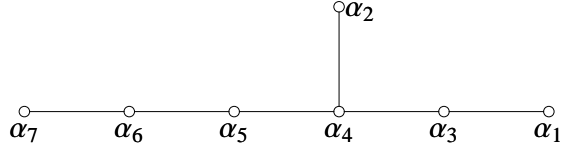}}
\caption{Dynkin diagram of $\mathrm{E}_7$.}
\end{figure}
Let $\mathfrak{u}_0=\mathfrak{e}_{7(-133)}$, the compact simple Lie algebra of type $\mathrm{E}_7$, and $G=\mathrm{Aut}\mathfrak{u}_0$. By \cite{Y}, there are exactly three conjugacy classes of involutive automorphisms in $G$ with representatives $\sigma_1$, $\sigma_2$, and $\sigma_3$ such that \[G^{\sigma_1}\cong(\mathrm{Spin}(12)\times\mathrm{Sp}(1))/\langle(c,1),(-c,-1)\rangle,\] \[G^{\sigma_2}\cong((\mathrm{E}_{6(-78)}\times\mathrm{U}(1))/\langle(c',\mathrm{e}^\frac{2\pi\sqrt{-1}}{3})\rangle)\rtimes\langle\omega\rangle,\] \[G^{\sigma_3}\cong(\mathrm{SU}(8)/\langle\sqrt{-1}I_8\rangle)\rtimes\langle\omega\rangle,\]where $c$ is a certain element in the center of $\mathrm{Spin}(12)$, $c'$ is a nontrivial element in the center of the compact Lie group $\mathrm{E}_{6(-78)}$, and $\omega$ is an involutive automorphism such that $(\mathfrak{e}_{6(-78)}\oplus\sqrt{-1}\mathbb{R})^\omega\cong\mathfrak{f}_4\oplus0$, $\mathfrak{su}(8)^\omega=\mathfrak{sp}(4)$.

The author needs to find all the elementary abelian 2-subgroups in $\mathrm{Aut}\mathfrak{u}_0$ containing $\sigma_2$ because the noncompact dual of $\mathfrak{u}_0$ corresponding to $\sigma_2$ is just $\mathfrak{e}_{7(-25)}$. By \cite{Y}, the elementary abelian 2-subgroups in $\mathrm{Aut}\mathfrak{u}_0$ containing $\sigma_2$ are all induced from $\mathrm{Aut}\mathfrak{e}_{6(-78)}$ as follows.

Denote by $G_{\sigma_2}$ the subgroup of $G^{\sigma_2}$ generated by $\mathrm{E}_{6(-78)}$ and $\omega$. Let $p:G_{\sigma_2}\rightarrow\mathrm{Aut}\mathfrak{e}_{6(-78)}$ be the adjoint homomorphism, and let $i:G_{\sigma_2}\rightarrow G^{\sigma_2}$ be the inclusion map. For any elementary abelian 2-subgroup $\tilde{F}$ of $\mathrm{Aut}\mathfrak{e}_{6(-78)}$, let $F$ be the sylow 2-subgroup of $i(p^{-1}\tilde{F})\times\langle\sigma_2\rangle$. By \cite[Proposition 7.7\&Proposition 7.10]{Y}, $\tilde{F}\mapsto F$ gives one to one correspondence from the conjugacy classes of the elementary abelian 2-subgroups of $\mathrm{Aut}\mathfrak{e}_{6(-78)}$ onto the conjugacy classes of the elementary abelian 2-subgroups of $\mathrm{Aut}\mathfrak{u}_0$ containing $\sigma_2$.

Retain the previous notations, and $\tau_i$ for $1\leq i\leq 4$ lie in $G^{\sigma_2}$. By \cite{Y}, one has\[\tau_1\sim_G\tau_2\sim_G\sigma_1,\]\[\tau_2\sigma_2\sim_G\tau_3\sim_G\tau_3\sigma_2\sim_G\sigma_2,\] \[\tau_1\sigma_2\sim_G\tau_4\sim_G\tau_4\sigma_2\sim_G\sigma_3.\]
\begin{lemma}\label{1}
There are exactly 8 conjugacy classes of the elementary abelian 2-subgroups of rank 3 of $\mathrm{Aut}\mathfrak{u}_0$ containing $\sigma_2$: $\langle x_0,x_1,\sigma_2\rangle$, $\langle x_0,x_4,\sigma_2\rangle$, $\langle x_1,x_2,\sigma_2\rangle$, $\langle x_1,x_4,\sigma_2\rangle$, $\langle x_4,x_5,\sigma_2\rangle$, $\langle y_0,y_1,\sigma_2\rangle$, $\langle y_0,y_3,\sigma_2\rangle$, and $\langle y_3,y_4,\sigma_2\rangle$.
\end{lemma}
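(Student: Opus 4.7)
The plan is to apply the one-to-one correspondence $\tilde F\mapsto F$ recalled just before the lemma, which identifies conjugacy classes of elementary abelian $2$-subgroups of $\mathrm{Aut}\mathfrak{e}_{6(-78)}$ with conjugacy classes of elementary abelian $2$-subgroups of $\mathrm{Aut}\mathfrak{u}_0$ containing $\sigma_2$. First I would observe that this correspondence raises the rank by exactly one: because $\ker p$ has order $3$, which is coprime to $2$, the Sylow $2$-subgroup of $p^{-1}\tilde F$ is elementary abelian of the same rank as $\tilde F$; and because $\sigma_2$ lies outside $G_{\sigma_2}$, adjoining $\langle\sigma_2\rangle$ to $i(p^{-1}\tilde F)$ contributes a new independent involution. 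Consequently, a rank-$3$ elementary abelian $2$-subgroup of $\mathrm{Aut}\mathfrak{u}_0$ containing $\sigma_2$ corresponds to a Klein four subgroup of $\mathrm{Aut}\mathfrak{e}_{6(-78)}$, and its image under the correspondence is of the form $\langle\tilde F,\sigma_2\rangle$.

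The problem is thus reduced to enumerating Klein four subgroups of $\mathrm{Aut}\mathfrak{e}_{6(-78)}$. I would run through the four families $F_{r,s}$, $F'_{r,s}$, $F_{u,v,r,s}$, $F'_{u,v,r,s}$ recalled in the previous subsection and impose the rank-$2$ constraint on each: $r+s=1$ for $F_{r,s}$, yielding $\langle x_0,x_1\rangle$ and $\langle x_0,x_4\rangle$; $r+s=2$ for $F'_{r,s}$, yielding $\langle x_1,x_2\rangle$, $\langle x_1,x_4\rangle$ and $\langle x_4,x_5\rangle$; $1+u+2v+2s+r=2$ for $F_{u,v,r,s}$, yielding $\langle y_0,y_1\rangle$ and $\langle y_0,y_3\rangle$; and a careful analysis of the $F'_{u,v,r,s}$-family giving the last representative $\langle y_3,y_4\rangle$. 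Transferring each through the bijection yields the eight subgroups $\langle\tilde F,\sigma_2\rangle$ in the statement, and pairwise non-conjugacy is inherited from the pairwise non-conjugacy of the Klein four subgroups in $\mathrm{Aut}\mathfrak{e}_{6(-78)}$ via the injectivity of the correspondence.

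The main obstacle is the sifting of the $F'_{u,v,r,s}$-family, where a naive count produces several rank-$2$ candidates beyond $\langle y_3,y_4\rangle$; one has to verify, using the quaternionic description of the generators $y_i$ inside $\mathrm{Sp}(4)/\langle-I_4\rangle$ together with the conjugacy relations $\tau_3\sim_G\tau_2\sigma_2\sim_G\sigma_2$ and $\tau_4\sim_G\tau_1\sigma_2\sim_G\sigma_3$ displayed just before the lemma, that after adjoining $\sigma_2$ every remaining candidate becomes conjugate in $\mathrm{Aut}\mathfrak{u}_0$ to a subgroup already produced from the $x$-families or from the $F_{u,v,r,s}$-family. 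Once this redundancy is eliminated, the eight representatives listed in the statement are precisely the surviving classes.
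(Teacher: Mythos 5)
Your overall strategy is exactly the one the paper uses: the correspondence $\tilde F\mapsto F$ of [Y, Propositions 7.7 and 7.10] reduces the lemma to listing the conjugacy classes of Klein four subgroups of $\mathrm{Aut}\,\mathfrak{e}_{6(-78)}$, which are then read off from [Y, Propositions 6.3 and 6.5]. Your rank bookkeeping (the kernel of $p$ has odd order, $\sigma_2\notin G_{\sigma_2}$, so the rank goes up by exactly one) is correct, and your extraction of the rank-two members of $F_{r,s}$, $F'_{r,s}$ and $F_{u,v,r,s}$ matches the eight groups in the statement.

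The problem is the step you yourself call ``the main obstacle,'' which you describe but do not carry out, and for which your proposed method is misdirected. Taken at face value, the parameter ranges for $F'_{u,v,r,s}$ yield four rank-two groups, namely $\langle y_1,y_2\rangle$, $\langle y_1,y_3\rangle$, $\langle y_3,y_5\rangle$ and $\langle y_3,y_4\rangle$, so your enumeration produces eleven candidates rather than eight; the entire content of the lemma is that the first three are redundant. You propose to check redundancy ``after adjoining $\sigma_2$,'' i.e.\ by exhibiting conjugations inside $\mathrm{Aut}\,\mathfrak{u}_0$. But since $\tilde F\mapsto F$ is a bijection on conjugacy classes --- the very injectivity you invoke to get pairwise non-conjugacy of the eight --- conjugacy of $\langle\tilde F,\sigma_2\rangle$'s in $\mathrm{Aut}\,\mathfrak{u}_0$ is equivalent to conjugacy of the $\tilde F$'s in $\mathrm{Aut}\,\mathfrak{e}_{6(-78)}$, so passing to $\mathrm{E}_7$ buys nothing: the identification must already hold at the $\mathrm{E}_6$ level, and it must identify $\langle y_1,y_2\rangle$, $\langle y_1,y_3\rangle$, $\langle y_3,y_5\rangle$ with members of the $F'_{r,s}$ family (not of the $F_{u,v,r,s}$ family, whose members contain outer automorphisms while these groups are inner). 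That identification is precisely what the exact statements of [Y, Propositions 6.3 and 6.5] (equivalently, the eight $\mathfrak{e}_6$ rows of [HY, Table 4]) provide, and without quoting those conjugacy conditions or verifying them directly from the quaternionic realizations, your argument does not establish the count of eight.
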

\begin{proof}
The conjugacy classes of the Klein four subgroups of $\mathrm{Aut}\mathfrak{e}_{6(-78)}$ are given by \cite[Proposition 6.3\&Proposition 6.5]{Y}, and then the conclusion follows from \cite[Proposition 7.7\&Proposition 7.10]{Y}.
\end{proof}
\subsection{Klein four symmetric pairs of holomorphic type for $\mathfrak{e}_{7(-25)}$}
Let $\mathfrak{g}_0$ be a non-compact simple Lie algebra. For each involutive automorphism $\sigma$ of $\mathfrak{g}_0$, there exists a Cartan involution $\theta$ of $\mathfrak{g}_0$, which commutes with $\sigma$. Denote by $\mathfrak{g}:=\mathfrak{g}_0+\sqrt{-1}\mathfrak{g}_0$ the complexification of $\mathfrak{g}_0$, and write $\mathfrak{u}_0:=\mathfrak{g}_0^\theta+\sqrt{-1}\mathfrak{g}_0^{-\theta}$ for the compact dual of $\mathfrak{g}_0$. Extend $\sigma$ to the unique holomorphic involutive automorphism of $\mathfrak{g}$ and restrict it to $\mathfrak{u}_0$, and then $\sigma$ becomes an involutive automorphism of $\mathfrak{u}_0$. Thus, any involutive automorphism $\sigma$ of $\mathfrak{g}_0$ gives a pair $(\mathfrak{u}_0,\sigma)$, where $\mathfrak{u}_0$ is a compact real form of $\mathfrak{g}$ and $\sigma$ is regarded as an involutive automorphism of $\mathfrak{u}_0$. If $\sigma'$ is conjugate to $\sigma$ in $\mathrm{Aut}\mathfrak{g}_0$, which gives another pair $(\mathfrak{u}'_0,\sigma')$, then there exists an element $g\in\mathrm{Aut}\mathfrak{g}$ such that $g\cdot\mathfrak{u}_0=\mathfrak{u}'_0$ and $g^{-1}\sigma g=\sigma'$; namely, the two pairs $(\mathfrak{u}_0,\sigma)$ and $(\mathfrak{u}'_0,\sigma')$ are conjugate by elements in $\mathrm{Aut}\mathfrak{g}$. Thus, up to conjugations of compact real forms of $\mathfrak{g}$, there exists a map\[\pi:\{\textrm{Conjugacy classes of }\mathrm{Aut}\mathfrak{g}_0\}\longrightarrow\{\textrm{Conjugacy classes of }\mathrm{Aut}\mathfrak{u}_0\}\]which is surjective but not necessarily injective.

If $\Gamma$ is a finite abelian subgroup of $\mathrm{Aut}\mathfrak{g}_0$, there exists a Cartan involution $\theta\in\mathrm{Aut}\mathfrak{g}_0$ centralizing $\Gamma$ by \cite[Proposition 2]{H}. In the same way, $\Gamma$ becomes a finite abelian subgroup of $\mathrm{Aut}\mathfrak{u}_0$. Thus, any finite abelian subgroup of $\mathrm{Aut}\mathfrak{g}_0$ is obtained from a finite abelian subgroup of $\mathrm{Aut}\mathfrak{u}_0$.

On the other hand, fix an involutive automorphism $\theta$ of $\mathfrak{u}_0$, and by holomorphic extension and restriction, $\theta$ is a Cartan involution of a noncompact dual $\mathfrak{g}_0$ of $\mathfrak{u}_0$. Let $\Theta:\mathrm{Aut}\mathfrak{u}_0\rightarrow\mathrm{Aut}\mathfrak{u}_0$ be given by $f\mapsto\theta^{-1}f\theta$, whose differential is $\theta$ on $\mathfrak{u}_0$. Suppose that $\Gamma_1$ and $\Gamma_2$ are two finite abelian subgroups of $\mathrm{Aut}\mathfrak{u}_0$, which are centralized by $\theta$. If $\Gamma_2=g^{-1}\Gamma_1g$ for some $g\in(\mathrm{Aut}\mathfrak{u}_0)^\Theta$, then $\Gamma_1$ and $\Gamma_2$ are conjugate in $\mathrm{Aut}\mathfrak{g}_0$ because $(\mathrm{Aut}\mathfrak{u}_0)^\Theta$ is contained in $\mathrm{Aut}\mathfrak{g}_0$.

Thus, for a compact Lie algebra $\mathfrak{u}_0$, a pair $(\theta,\Gamma)$ with $\theta$ an involutive automorphism of $\mathfrak{u}_0$ and $\Gamma$ a Klein four subgroup of $\mathrm{Aut}\mathfrak{u}_0$ such that $\theta\notin\Gamma$ and $\theta$ centralizes $\Gamma$, gives a Klein four symmetric subalgebra of the noncompact Lie algebra $\mathfrak{g}_0=\mathfrak{u}_0^\theta+\sqrt{-1}\mathfrak{u}_0^{-\theta}$. Moreover, if two such pairs are conjugate $\mathrm{Aut}\mathfrak{u}_0$, then they give a same Klein four symmetric pair up to isomorphism. Here, the pairs $(\theta_1,\Gamma_1)$ and $(\theta_2,\Gamma_2)$ are said to be conjugate in $\mathrm{Aut}\mathfrak{u}_0$ if there exists an element $g\in\mathrm{Aut}\mathfrak{u}_0$ such that $\theta_2=g^{-1}\theta_1g$ and $\Gamma_2=g^{-1}\Gamma_1g$.

Now let $\mathfrak{g}_0=\mathfrak{e}_{7(-25)}$. Retain the notations $\mathfrak{u}_0=\mathfrak{e}_{7(-133)}$ and $G=\mathrm{Aut}\mathfrak{u}_0$ as before. In order to find all the Klein four symmetric pairs of holomorphic type for $\mathfrak{g}_0$, according to the argument above, the author needs to find all the conjugacy classes of the pairs $(\theta,\Gamma)$ with $\theta\in G$ involutive automorphisms and $\Gamma\subseteq G$ Klein four subgroups, which satisfy the following four requirements:
\begin{enumerate}[(i)]
\item $\theta\notin\Gamma$;
\item $\theta\sim_G\sigma_2$;
\item $\theta$ centralizes $\Gamma$;
\item every element $\sigma\in\Gamma$ is identity on the center of $\mathfrak{u}_0^\theta$; namely, each $\sigma\in\Gamma$ gives a symmetric pair of holomorphic type for $\mathfrak{g}_0=\mathfrak{u}_0^\theta+\sqrt{-1}\mathfrak{u}_0^{-\theta}$.
\end{enumerate}
Because of the requirements (i), (ii), and (iii), one knows immediately that each subgroup generated by such $\theta$ and $\Gamma$ must be conjugate to one of the 8 subgroups listed in \lemmaref{1} up to conjugation.

In order to obtain more information, the author makes use of some tools defined in \cite{Y}. For an elementary abelian 2-subgroup $F$ of $G$, define\[H_F:=\{1\}\cup\{x\in F\mid x\sim_G\sigma_1\},\]which is a subgroup by \cite[Lemma 7.3]{Y}. Moreover, define\[m:H_F\times H_F\rightarrow\{\pm1\}\]given by $m(x,y)=-1$, if $\langle x,y\rangle$ is the only Klein four subgroup up to conjugation such that $G^{\langle x,y\rangle}\cong\mathfrak{su}(6)\oplus2(\sqrt{-1}\mathbb{R})$ \cite[Table 4]{HY}; and $m(x,y)=1$ otherwise.

The defect index is defined as\[\mathrm{defe}F:=|\{x\in F\mid x\sim_G\sigma_2\}|-|\{x\in F\mid x\sim_G\sigma_3\}|.\]
\begin{lemma}\label{2}
If the subgroup generated by $\theta$ and $\Gamma$ is conjugate to $\langle x_0,x_1,\sigma_2\rangle$ in $G$, then the pair $(\theta,\Gamma)$ cannot satisfy the requirement (iv).
\end{lemma}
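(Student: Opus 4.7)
The plan is to study, for each candidate involution $\theta$, the character
\[
\chi_\theta\colon\langle x_0,x_1,\sigma_2\rangle\longrightarrow\{\pm1\}
\]
that records the scalar by which each element acts on the one-dimensional space $Z(\mathfrak{u}_0^\theta)$. Requirement (iv) is equivalent to $\Gamma\subseteq\ker\chi_\theta$, while $\theta$ always lies in $\ker\chi_\theta$. If $\chi_\theta$ is non-trivial, then $\ker\chi_\theta$ has order~$4$, so the Klein four group $\Gamma$ must coincide with $\ker\chi_\theta$, forcing $\theta\in\Gamma$ and contradicting (i). It therefore suffices to verify the non-triviality of $\chi_\theta$ for every permissible choice of $\theta$.

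First I would enumerate the possibilities for $\theta$. Since $\theta\sim_G\sigma_2$ and the relations $x_0=\tau_3\sim_G\sigma_2$ together with $\tau_3\sigma_2\sim_G\sigma_2$ from the list preceding \lemmaref{1} are at hand, the involutions in $\langle x_0,x_1,\sigma_2\rangle$ that are $G$-conjugate to $\sigma_2$ are exactly the three elements $x_0$, $\sigma_2$ and $x_0\sigma_2$, which together with the identity already form the Klein four subgroup $\langle x_0,\sigma_2\rangle$.

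The crucial step is to identify the fixed subalgebra of this subgroup $\langle x_0,\sigma_2\rangle$. On $\mathfrak{u}_0^{\sigma_2}\cong\mathfrak{e}_{6(-78)}\oplus\sqrt{-1}\mathbb{R}$ the element $x_0=\tau_3$ acts as the diagram automorphism $\omega$, and the identity $(\mathfrak{e}_{6(-78)}\oplus\sqrt{-1}\mathbb{R})^\omega\cong\mathfrak{f}_4\oplus 0$ recorded in the excerpt yields $\mathfrak{u}_0^{\langle x_0,\sigma_2\rangle}=\mathfrak{f}_4$. In particular, this common fixed subalgebra is semisimple and so has trivial centre.

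Finally, for any $\theta\in\{x_0,\sigma_2,x_0\sigma_2\}$ and any $\sigma\in\{x_0,\sigma_2,x_0\sigma_2\}\setminus\{\theta\}$, one has $\langle\theta,\sigma\rangle=\langle x_0,\sigma_2\rangle$, hence $(\mathfrak{u}_0^\theta)^\sigma=\mathfrak{f}_4$. Since the one-dimensional centre $Z(\mathfrak{u}_0^\theta)$ cannot lie inside a centreless subalgebra, $\sigma$ must act by $-1$ on it; that is, $\chi_\theta(\sigma)=-1$, which establishes the non-triviality of $\chi_\theta$ and completes the argument. The main obstacle is the identification $\mathfrak{u}_0^{\langle x_0,\sigma_2\rangle}=\mathfrak{f}_4$, but it is dispatched at once by the quoted description of the action of $\omega$ on $\mathfrak{e}_{6(-78)}\oplus\sqrt{-1}\mathbb{R}$.
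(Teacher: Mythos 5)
Your proof is correct and follows essentially the same route as the paper: the key input in both is that $(\mathfrak{e}_{6(-78)}\oplus\sqrt{-1}\mathbb{R})^{\omega}\cong\mathfrak{f}_4$ forces every non-identity element of $\langle x_0,\sigma_2\rangle$ to act by $-1$ on the one-dimensional centre of the fixed algebra of any other, and the counting step ($\Gamma$ is an index-two subgroup not containing $\theta$, hence meets $\{x_0,x_0\sigma_2\}$, equivalently $\Gamma=\ker\chi_\theta\ni\theta$) is the paper's argument read contrapositively. Your character $\chi_\theta$ merely packages the paper's three cases ($\theta=\sigma_2$, $x_0$, $x_0\sigma_2$) into one uniform statement.
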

\begin{proof}
Without loss of generality, the author assumes that the group generated by $\theta$ and $\Gamma$ is equal to $\langle x_0,x_1,\sigma_2\rangle$. By the constructions in \cite{Y}, $x_0=\omega=\tau_3\sim_G\sigma_2$, $x_1\sim_{\mathrm{E}_{6(-78)}}\tau_1\sim_G\sigma_1$, $x_0x_1\sim_{\mathrm{E}_{6(-78)}}\tau_3\tau_1\sim_G\sigma_3$, $x_0\sigma_2=\tau_3\sigma_2\sim_G\sigma_2$, $x_1\sigma_2\sim_G\tau_1\sigma_2\sim_G\sigma_3$, and $x_0x_1\sigma_2\sim_G\tau_4\sigma_2\sim_G\sigma_3$. Hence, the elements conjugate to $\sigma_2$ are exactly $\sigma_2$, $x_0$, and $x_0\sigma_2$.

Suppose that $\theta=\sigma_2$. Because $\mathfrak{u}_0^{\sigma_2}=\mathfrak{e}_{6(-78)}\oplus\sqrt{-1}\mathbb{R}$ and $(\mathfrak{e}_{6(-78)}\oplus\sqrt{-1}\mathbb{R})^\omega=\mathfrak{f}_4\subset\mathfrak{e}_{6(-78)}$, $x_0=\omega$ is not identity on the center $\sqrt{-1}\mathbb{R}$ of $\mathfrak{u}_0^{\sigma_2}$. Immediately, neither does $x_0\sigma_2$. But $\Gamma$ contains one of $x_0$ and $x_0\sigma_2$, so $(\theta,\Gamma)$ fails to satisfy the requirement (iv).

Because $x_0$ and $\sigma_2$ commute and are conjugate with each other, $\sigma_2$ is not identity on the center of $\mathfrak{u}_0^{x_0}$, and neither does $x_0\sigma_2$. If $\theta=x_0$, the requirement (iv) fails for the same reason. Similarly, it is not satisfied for $\theta=x_0\sigma_2$.
\end{proof}
\begin{lemma}\label{3}
If the subgroup generated by $\theta$ and $\Gamma$ is conjugate to $\langle x_0,x_4,\sigma_2\rangle$ in $G$, then the pair $(\theta,\Gamma)$ cannot satisfy the requirement (iv).
\end{lemma}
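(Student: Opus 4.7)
The plan is to mimic the proof of \lemmaref{2}. Assume without loss of generality that $\langle\theta,\Gamma\rangle=\langle x_0,x_4,\sigma_2\rangle$; requirement (ii) then forces $\theta$ to be one of the non-identity elements of this order-$8$ group that is $G$-conjugate to $\sigma_2$. For each such candidate $\theta$ I want to show that every Klein four subgroup $\Gamma$ with $\theta\notin\Gamma$ and $\langle\theta,\Gamma\rangle=\langle x_0,x_4,\sigma_2\rangle$ contains at least one element that acts by $-1$ on the one-dimensional centre $Z(\mathfrak{u}_0^\theta)$.

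The first step is to determine the $G$-conjugacy class of each of the seven non-identity elements. Since $x_0=\tau_3$ is outer on $\mathfrak{e}_{6(-78)}$ while $x_4$ is inner, the relations $\tau_1\sim_G\tau_2\sim_G\sigma_1$, $\tau_3\sim_G\tau_3\sigma_2\sim_G\tau_2\sigma_2\sim_G\sigma_2$, and $\tau_4\sim_G\tau_4\sigma_2\sim_G\tau_1\sigma_2\sim_G\sigma_3$ recalled in the previous subsection immediately give $x_0\sim_G x_0\sigma_2\sim_G\sigma_2$ and $x_4\sim_G\sigma_1$. The classes of $x_4\sigma_2$, $x_0x_4$, and $x_0x_4\sigma_2$ depend on whether $x_4$ matches $\tau_1$ or $\tau_2$ inside $\mathrm{Aut}\mathfrak{e}_{6(-78)}$; I would resolve this using the defect index $\mathrm{defe}$ and the subgroup $H_F$ introduced earlier, applied to $F_{0,0}=\langle x_0,x_4\rangle$ and to its extension $\langle x_0,x_4,\sigma_2\rangle$, as in \cite[Proposition 7.7]{Y}, or equivalently by computing the fixed subalgebra of $x_4$ in $\mathfrak{e}_{6(-78)}$ via the $\mathrm{Sp}(3)\times\mathrm{Sp}(1)$-model of $(\mathrm{Int}\mathfrak{e}_{6(-78)})^{\tau_3}$.

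For each candidate $\theta$, I would reduce to $\theta=\sigma_2$ by $G$-conjugation (which is legitimate because it preserves the $G$-class of the pair $(\theta,\Gamma)$) and then carry out the same calculation as in \lemmaref{2}. Under the identification $\mathfrak{u}_0^{\sigma_2}=\mathfrak{e}_{6(-78)}\oplus\sqrt{-1}\mathbb{R}$, the element $\omega=x_0$ acts by $-1$ on $\sqrt{-1}\mathbb{R}$ since $(\mathfrak{e}_{6(-78)}\oplus\sqrt{-1}\mathbb{R})^\omega=\mathfrak{f}_4\oplus 0$, whereas $x_4$ acts trivially on the centre because it lies in the connected compact group $\mathrm{E}_{6(-78)}$, which can only act by $+1$ on the one-dimensional summand by continuity. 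Combining these with the trivial action of $\sigma_2$ on $\mathfrak{u}_0^{\sigma_2}$ tabulates the $\{\pm 1\}$-action of every element of $\langle x_0,x_4,\sigma_2\rangle$ on $Z(\mathfrak{u}_0^{\sigma_2})$; a short combinatorial check then shows that the elements acting trivially do not form, together with the identity, a Klein four subgroup disjoint from $\theta$, so every admissible $\Gamma$ must contain an element acting by $-1$ and hence violate (iv).

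I expect the main obstacle to be the refined $\tau_1$-versus-$\tau_2$ identification in Step 1, since the outcome controls exactly which of $x_4\sigma_2$, $x_0x_4$, $x_0x_4\sigma_2$ are themselves $G$-conjugate to $\sigma_2$ and hence eligible to serve as $\theta$. Once that identification is settled, the action analysis and the final combinatorial check are direct transcriptions of the corresponding steps in the proof of \lemmaref{2}.
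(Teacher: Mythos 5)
Your overall strategy --- pin down the $G$-conjugacy classes of the seven involutions in $\langle x_0,x_4,\sigma_2\rangle$, reduce to the case $\theta=\sigma_2$, and exploit that $x_0=\omega$ acts by $-1$ on $Z(\mathfrak{u}_0^{\sigma_2})$ while everything in the identity component of $G^{\sigma_2}$ acts by $+1$ --- is the same as the paper's, and your sign table together with the final combinatorial check is correct for $\theta=\sigma_2$. For the record, the paper settles your Step 1 in one stroke: by \cite[Proposition 7.9]{Y} one has $\mathrm{defe}\langle x_0,x_4,\sigma_2\rangle=6$, which forces all six non-identity elements other than $x_4$ to be conjugate to $\sigma_2$; in particular $x_4\sigma_2\sim_G\sigma_2$, so the $\tau_1$-versus-$\tau_2$ ambiguity you flag is resolved without examining $x_4$ inside the $\mathrm{Sp}(3)\times\mathrm{Sp}(1)$ model.

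The genuine gap is in the reduction for the five candidates $\theta\neq\sigma_2$. Conjugating by $g\in G$ with $g^{-1}\theta g=\sigma_2$ does preserve the $G$-class of the pair, but it replaces $\Gamma$ by $g^{-1}\Gamma g$, which is a Klein four subgroup of $g^{-1}\langle x_0,x_4,\sigma_2\rangle g$ and has no reason to lie in $\langle x_0,x_4,\sigma_2\rangle$ itself; your $\{\pm1\}$-table only records the action of elements of the latter group on $Z(\mathfrak{u}_0^{\sigma_2})$, so it says nothing about $g^{-1}\Gamma g$. What you actually need is a $g$ that sends $\theta$ to $\sigma_2$ \emph{and} normalizes $\langle x_0,x_4,\sigma_2\rangle$, i.e.\ carries the triple $(\langle x_0,x_4,\sigma_2\rangle,\theta,\Gamma)$ to $(\langle x_0,x_4,\sigma_2\rangle,\sigma_2,\Gamma')$ with $\Gamma'$ again inside the standard group. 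The existence of such a $g$ is exactly the content of \cite[Proposition 7.25]{Y}: an abstract automorphism of the elementary abelian $2$-group that preserves the $G$-class of every element (and the invariants $H_F$ and $m$) is induced by conjugation in $G$. The paper constructs such an automorphism explicitly in the two cases $x_4\in\Gamma$ and $x_4\notin\Gamma$ and concludes that every admissible pair is conjugate to $(\sigma_2,\langle x_0,x_4\rangle)$, which fails (iv) because $x_0$ moves the center. Your argument becomes complete once this Proposition~7.25 step is inserted (or, alternatively, once you run the commuting-conjugate-involutions symmetry argument from \lemmaref{2} separately for each $\theta$); as written, the parenthetical justification does not bridge the gap.
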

\begin{proof}
Without loss of generality, assumes that the group generated by $\theta$ and $\Gamma$ is equal to $\langle x_0,x_4,\sigma_2\rangle$. Firstly, $x_4\sim_G\sigma_1$ by \cite[Lemma 7.5]{Y}. Secondly, by \cite[Proposition 7.9]{Y}, one obtains that $\mathrm{defe}\langle x_0,x_4,\sigma_2\rangle=6$, and then the elements in $\langle x_0,x_4,\sigma_2\rangle$ are all conjugate to $\sigma_2$ except $1$ and $x_4$.

Let $\theta$ be an element conjugate to $\sigma_2$. Suppose that $\Gamma$ contains $x_4$; namely, $\Gamma=\langle x_4,z\rangle$ for some element $z$ conjugate to $\sigma_2$ other than $\theta$ or $x_4\theta$. Then let $f$ be a group automorphism from $\langle x_0,x_4,\sigma_2\rangle$ to itself, with $f(\theta)=\sigma_2$, $f(x_4)=x_4$, and $f(u)=x_0$. It is obvious that $f(g)\sim_G g$ for all $g\in\langle x_0,x_4,\sigma_2\rangle$. Also, $x_4$ is the unique element conjugate to $\sigma_1$ in $\langle x_0,x_4,\sigma_2\rangle$, so $f$ gives a conjugation between the pairs $(\theta,\Gamma)$ and $(\sigma_2,\langle x_0,x_4\rangle)$ by \cite[Proposition 7.25]{Y}. But $x_0$ is not identity on the center of $\mathfrak{u}_0^{\sigma_2}$, so the pair $(\theta,\Gamma)$ does not satisfy the requirement (iv).

Suppose that $\Gamma$ does not contain $x_4$; namely, $\Gamma=\langle z_1,z_2\rangle$ for two elements $z_1$ and $z_2$ conjugate to $\sigma_2$ such that $z_1z_2\sim_G\sigma_2$ and $\theta$ is not equal to $z_1$, $z_2$, or $z_1z_2$. Then let $f$ be a group automorphism from $\langle x_0,x_4,\sigma_2\rangle$ to itself, with $f(\theta)=\sigma_2$, $f(z_1)=x_4\sigma_2$, and $f(z_2)=x_0$. It is obvious that $f(g)\sim_G g$ for all $g\in\langle x_0,x_4,\sigma_2\rangle$. For the same reason, the pair $(\theta,\Gamma)$ is conjugate to $(\sigma_2,\langle x_0,x_4\rangle)$, and the pair $(\theta,\Gamma)$ does not satisfy the requirement (iv).
\end{proof}
\begin{lemma}\label{4}
If the subgroup generated by $\theta$ and $\Gamma$ is conjugate to $\langle y_0,y_1,\sigma_2\rangle$ in $G$, then the pair $(\theta,\Gamma)$ cannot satisfy the requirement (iv).
\end{lemma}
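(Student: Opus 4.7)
The plan follows the template of \lemmaref{2} and \lemmaref{3}. Without loss of generality, assume the subgroup generated by $\theta$ and $\Gamma$ equals $\langle y_0, y_1, \sigma_2\rangle$. First I would pin down the $G$-conjugacy classes of the seven non-identity elements: the excerpt already records $\sigma_2 \sim_G \sigma_2$, $y_0 = \tau_4 \sim_G \sigma_3$, and $y_0\sigma_2 \sim_G \sigma_3$. Since $y_1 = \mathbf{i}I_4$ lies in the identity component $(\mathrm{Int}\,\mathfrak{e}_{6(-78)})^{\tau_4} \cong \mathrm{Sp}(4)/\langle -I_4\rangle$, it is an inner involution of $\mathfrak{e}_{6(-78)}$, hence conjugate in $\mathrm{Aut}\,\mathfrak{e}_{6(-78)}$ to $\tau_1$ or $\tau_2$; both of these are $G$-conjugate to $\sigma_1$, so $y_1 \sim_G \sigma_1$. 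The classes of $y_0 y_1$, $y_1\sigma_2$, and $y_0 y_1\sigma_2$ are then determined by the defect index of $\langle y_0, y_1, \sigma_2\rangle$ computed via \cite[Proposition 7.9]{Y}, together with the pairing $m$ on $H_{\langle y_0, y_1, \sigma_2\rangle}$.

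Requirement (ii) forces $\theta$ to be one of the elements $G$-conjugate to $\sigma_2$ in $\langle y_0, y_1, \sigma_2\rangle$. Exactly as in the proof of \lemmaref{3}, I would build a class-preserving group automorphism $f$ of $\langle y_0, y_1, \sigma_2\rangle$ with $f(\theta) = \sigma_2$ and invoke \cite[Proposition 7.25]{Y} to realize $f$ by a $G$-conjugation. This reduces the verification of requirement (iv) to the canonical case $(\sigma_2, \Gamma_0)$, where $\Gamma_0$ ranges over Klein four subgroups of $\langle y_0, y_1, \sigma_2\rangle$ not containing $\sigma_2$.

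The decisive observation is then combinatorial. Any such $\Gamma_0$ is a two-dimensional $\mathbb{F}_2$-subspace avoiding the element $\sigma_2$, so its image in the quotient $\langle y_0, y_1, \sigma_2\rangle / \langle\sigma_2\rangle \cong (\mathbb{Z}/2)^2$ is all of the quotient. Hence $\Gamma_0$ meets every non-trivial coset of $\langle\sigma_2\rangle$ in exactly one element, and in particular contains either $y_0$ or $y_0\sigma_2$. Because $\sigma_2$ acts trivially on $\mathfrak{u}_0^{\sigma_2}$, it suffices to show that $y_0$ does not fix $Z(\mathfrak{u}_0^{\sigma_2}) = \sqrt{-1}\,\mathbb{R}$ pointwise. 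This follows from $y_0 = \tau_4 = \omega\cdot\exp(\sqrt{-1}\,\pi H_{\alpha_2})$: the factor $\exp(\sqrt{-1}\,\pi H_{\alpha_2}) \in \mathrm{E}_{6(-78)}$ centralizes the one-dimensional center, while $\omega$ acts as $-1$ on $\sqrt{-1}\,\mathbb{R}$ since $(\mathfrak{e}_{6(-78)} \oplus \sqrt{-1}\,\mathbb{R})^\omega \cong \mathfrak{f}_4 \oplus 0$ has trivial intersection with the center, exactly as exploited in the proof of \lemmaref{2}.

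The main obstacle will be the reduction step: constructing a class-preserving automorphism $f$ of $\langle y_0, y_1, \sigma_2\rangle$ sending each candidate $\theta$ to $\sigma_2$ requires a careful case analysis, since three distinct $G$-conjugacy classes of involutions occur in this group (as opposed to only two in the situation of \lemmaref{3}), and one must verify in each case that the permutation of generators produced by the bijection respects the labels $\sigma_1$, $\sigma_2$, $\sigma_3$. Once this reduction is in place, the coset argument above and the computation of $\omega$ on the center of $\mathfrak{u}_0^{\sigma_2}$ give the failure of requirement (iv) immediately.
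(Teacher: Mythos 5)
Your proposal is correct and follows essentially the same route as the paper: determine the $G$-conjugacy classes of the seven involutions (using the listed relations plus the defect index $-4$ to conclude that $\sigma_2$ is the only element in its class, forcing $\theta=\sigma_2$), observe that $\Gamma$ must then contain $y_0$ or $y_0\sigma_2$, and note that $\tau_4=\omega\exp(\sqrt{-1}\pi H_{\alpha_2})$ acts as $-1$ on $Z(\mathfrak{u}_0^{\sigma_2})$ because $\omega$ does. Your coset argument modulo $\langle\sigma_2\rangle$ is a slightly cleaner way to see that $y_0$ or $y_0\sigma_2$ lies in $\Gamma$ than the paper's appeal to \cite[Proposition 7.25]{Y}, and the ``main obstacle'' you anticipate evaporates once the defect computation shows $\theta=\sigma_2$ is forced outright.
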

\begin{proof}
Without loss of generality, assumes that the group generated by $\theta$ and $\Gamma$ is equal to $\langle y_0,y_1,\sigma_2\rangle$. Similarly, one obtains immediately from \cite{Y} that $y_0=\tau_4\sim_G\sigma_3$, $y_3\sim_G\sigma_1$, $y_1\sigma_2\sim_G\sigma_3$, and $y_0\sigma_2=\tau_4\sigma_2\sim_G\sigma_3$. Moreover, by \cite[Proposition 7.9]{Y}, $\mathrm{defe}\langle y_0,y_1,\sigma_2\rangle=-4$. Then $y_0y_1\sim_G\sigma_3$ and $y_0y_1\sigma_2\sim\sigma_3$. Hence, the only element conjugate to $\sigma_2$ is $\sigma_2$ itself in $\langle y_0,y_1,\sigma_2\rangle$. Thus, by \cite[Proposition 7.25]{Y}, a similar argument as the proof for \lemmaref{3} shows that $(\theta,\Gamma)$ is conjugate to either $(\sigma_2,\langle y_0,y_1\rangle)$ for the case $y_1\in\Gamma$, or $(\sigma_2,\langle y_0,y_1\sigma_2\rangle)$ for the case $y_1\notin\Gamma$. In particular, $y_0\in\Gamma$ in both cases. It is known that $y_0=\tau_4=x_1\tau_3$, so $y_0$ is not the identity on the center of $\mathfrak{u}_0^{\sigma_2}$. This completes the proof.
\end{proof}
\begin{lemma}\label{5}
If the subgroup generated by $\theta$ and $\Gamma$ is conjugate to $\langle y_0,y_3,\sigma_2\rangle$ in $G$, then the pair $(\theta,\Gamma)$ cannot satisfy the requirement (iv).
\end{lemma}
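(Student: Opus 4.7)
The plan is to mirror the pattern of \lemmaref{2}--\lemmaref{4}. Assume without loss of generality that the subgroup generated by $\theta$ and $\Gamma$ is exactly $\langle y_0, y_3, \sigma_2\rangle$; I would first catalogue the $G$-conjugacy classes of its seven nontrivial elements, then invoke \cite[Proposition 7.25]{Y} to normalize $\theta$ to $\sigma_2$, and finally exhibit an element of the normalized $\Gamma$ that acts as $-1$ on the center $\sqrt{-1}\mathbb{R}$ of $\mathfrak{u}_0^{\sigma_2}$, thereby violating requirement (iv).

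For the classification, the proof of \lemmaref{4} already supplies $y_0=\tau_4\sim_G\sigma_3$, $y_0\sigma_2\sim_G\sigma_3$, $y_3\sim_G\sigma_1$, and $\sigma_2\sim_G\sigma_2$. The subgroup property of $H_F$ from \cite[Lemma 7.3]{Y} forces $H_F=\{1,y_3\}$: for each of $y_3\sigma_2$, $y_0y_3$, $y_0y_3\sigma_2$, multiplying by $y_3$ would drag $\sigma_2$, $y_0$, or $y_0\sigma_2$ respectively into $H_F$, none of which is $\sigma_1$-conjugate. The defect index $\mathrm{defe}\langle y_0,y_3,\sigma_2\rangle$ computed via \cite[Proposition 7.9]{Y} then determines how the remaining three elements distribute between the $\sigma_2$- and $\sigma_3$-classes. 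Independently of this bookkeeping, the decisive structural fact is that the subset of $\langle y_0,y_3,\sigma_2\rangle$ acting trivially on $Z(\mathfrak{u}_0^{\sigma_2})=\sqrt{-1}\mathbb{R}$ is exactly the order-$4$ subgroup $\langle y_3,\sigma_2\rangle$: each $y_i$ with $1\le i\le 6$ lies in $(\mathrm{Int}\mathfrak{e}_{6(-78)})^{\tau_4}$ and therefore acts as an inner automorphism of $\mathfrak{e}_{6(-78)}$, trivial on $\sqrt{-1}\mathbb{R}$, while $y_0=\tau_4=\omega\exp(\sqrt{-1}\pi H_{\alpha_2})$ acts as $-1$ on $\sqrt{-1}\mathbb{R}$ because $(\mathfrak{e}_{6(-78)}\oplus\sqrt{-1}\mathbb{R})^\omega=\mathfrak{f}_4\oplus 0$.

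To finish, for each admissible $\theta$ (necessarily $\sim_G\sigma_2$) I would apply \cite[Proposition 7.25]{Y} to produce a group automorphism $f$ of $\langle y_0,y_3,\sigma_2\rangle$ with $f(\theta)=\sigma_2$ and $f(g)\sim_G g$ for every $g$, so that $(\theta,\Gamma)\sim_G(\sigma_2,f(\Gamma))$; since requirement (iv) is invariant under such $G$-conjugations, it suffices to refute (iv) for $(\sigma_2,f(\Gamma))$. Now $f(\Gamma)$ is a Klein four subgroup of $\langle y_0,y_3,\sigma_2\rangle$ with $\sigma_2\notin f(\Gamma)$, and the only Klein four subgroup of the order-$4$ group $\langle y_3,\sigma_2\rangle$ is $\langle y_3,\sigma_2\rangle$ itself, which does contain $\sigma_2$; hence $f(\Gamma)\not\subseteq\langle y_3,\sigma_2\rangle$, so $f(\Gamma)$ contains some element outside $\langle y_3,\sigma_2\rangle$, which acts as $-1$ on $Z(\mathfrak{u}_0^{\sigma_2})$, and (iv) fails. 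The main obstacle I anticipate is verifying the hypotheses of \cite[Proposition 7.25]{Y} for every admissible $\theta$, which demands the full conjugacy-class table from the previous paragraph and care about the bilinear form $m$ on $H_F$; once that is in place, the rest is a clean counting argument inside an elementary abelian $2$-group of order $8$.
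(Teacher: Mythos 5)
Your proof is correct and follows essentially the same route as the paper's (which is itself only a sketch): the same conjugacy-class bookkeeping for the seven nontrivial elements, the same normalization of $\theta$ to $\sigma_2$ via \cite[Proposition 7.25]{Y}, and the same decisive fact that the elements involving $y_0=\tau_4$ act as $-1$ on $Z(\mathfrak{u}_0^{\sigma_2})$ while the lifts of $y_3$ and $\sigma_2$ act trivially. Your pigeonhole finish --- any Klein four subgroup avoiding $\sigma_2$ must contain an element outside the index-two subgroup $\langle y_3,\sigma_2\rangle$ of trivially-acting elements --- is just a compact repackaging of the paper's explicit reduction to the two pairs $(\sigma_2,\langle y_0,y_3\rangle)$ and $(\sigma_2,\langle y_0,y_3\sigma_2\rangle)$, both of which contain $y_0$.
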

\begin{proof}
Without loss of generality, assumes that the group generated by $\theta$ and $\Gamma$ is equal to $\langle y_0,y_3,\sigma_2\rangle$. The proof is parallel to those for the previous lemmas, so the author omits the detailed calculation and just states the sketch. Firstly, $y_0\sim_Gy_0\sigma_2\sim_Gy_0y_3\sim_Gy_0y_3\sigma_2\sim_G\sigma_3$, $y_3\sim_G\sigma_1$, and $y_3\sigma_2\sim_G\sigma_2$. Secondly, any pair $(\theta,\Gamma)$ is conjugate to either $(\sigma_2,\langle y_0,y_3\rangle)$ or $(\sigma_2,\langle y_0,y_3\sigma_2\rangle)$. Thirdly, $y_0$ is not the identity on the center of $\mathfrak{u}_0^{\sigma_2}$. This proof is completed.
\end{proof}
\begin{lemma}\label{6}
Suppose that the subgroup generated by $\theta$ and $\Gamma$ is conjugate to $\langle x_1,x_2,\sigma_2\rangle$ in $G$. Then the pair $(\theta,\Gamma)$ satisfies all of the four requirements if and only if it is conjugate to either $(\sigma_2,\langle x_1,x_2\rangle)$ or $(\sigma_2,\langle x_1,x_2\sigma_2\rangle)$.
\end{lemma}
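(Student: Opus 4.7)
The plan is to follow the same template as in the proofs of \lemmaref{3}--\lemmaref{5}, except that since the conclusion is now a positive existence statement I will complete the normal-form reduction and then verify requirement~(iv) for each surviving pair, instead of aiming for a contradiction.

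Without loss of generality assume $\langle\theta,\Gamma\rangle=\langle x_1,x_2,\sigma_2\rangle$. The first step is to pin down the $G$-conjugacy class of each of the seven non-identity elements: from the construction of $x_1,x_2$ in \cite{Y} and \cite[Lemma 7.5]{Y} one finds that $x_1$, $x_2$, and $x_1x_2$ all lie in the $G$-conjugacy class of $\sigma_1$, and evaluating $\mathrm{defe}\langle x_1,x_2,\sigma_2\rangle$ via \cite[Proposition 7.9]{Y} then distributes $x_1\sigma_2$, $x_2\sigma_2$, and $x_1x_2\sigma_2$ among the classes of $\sigma_2$ and $\sigma_3$. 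This identifies the admissible candidates for $\theta$ forced by requirement~(ii).

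Next I invoke \cite[Proposition 7.25]{Y}, according to which two pairs $(\theta,\Gamma)$ inside the same elementary abelian 2-subgroup are $G$-conjugate precisely when there is a group automorphism of $\langle x_1,x_2,\sigma_2\rangle\cong(\mathbb{Z}/2)^{3}$ preserving the $G$-conjugacy class of every element and sending one pair to the other. Using such an automorphism I move $\theta$ to $\sigma_2$; the problem then reduces to enumerating the four Klein four subgroups of $\langle x_1,x_2,\sigma_2\rangle$ that do not contain $\sigma_2$, and the residual class-preserving automorphisms fixing $\sigma_2$ will identify these four in pairs, leaving exactly the two representatives $\langle x_1,x_2\rangle$ and $\langle x_1,x_2\sigma_2\rangle$ named in the statement.

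Finally I verify requirement~(iv) for each of the two surviving pairs. Since $\mathfrak{u}_0^{\sigma_2}=\mathfrak{e}_{6(-78)}\oplus\sqrt{-1}\mathbb{R}$, an element of $G^{\sigma_2}$ acts trivially on the one-dimensional center $\sqrt{-1}\mathbb{R}$ if and only if it lies in the identity component of $G^{\sigma_2}$, because $\omega$ acts on this center by $-1$ while the identity component acts trivially; this is the very principle already used in \lemmaref{2}--\lemmaref{5}. Both $x_1$ and $x_2$ lie in $\mathrm{F}_{4(-52)}\subset(\mathrm{Int}\mathfrak{e}_{6(-78)})^{\tau_3}\subset\mathrm{E}_{6(-78)}$, and $\sigma_2$ restricts trivially to $\mathfrak{u}_0^{\sigma_2}$, hence every element of $\langle x_1,x_2\rangle$ and of $\langle x_1,x_2\sigma_2\rangle$ acts trivially on the center, giving the ``if'' direction. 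The step I anticipate will be the most delicate is the bookkeeping of the conjugacy classes of $x_1\sigma_2$, $x_2\sigma_2$, and $x_1x_2\sigma_2$ through the defect-index calculation, since these classes control precisely which automorphisms of $\langle x_1,x_2,\sigma_2\rangle$ are admissible when applying \cite[Proposition 7.25]{Y} and hence how the four candidate subgroups collapse into the two families named in the lemma.
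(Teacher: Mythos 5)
Your proposal follows essentially the same route as the paper: determine the $G$-conjugacy classes of the seven nontrivial elements (all of $x_1,x_2,x_1x_2$ are $\sim_G\sigma_1$ and all of $x_1\sigma_2,x_2\sigma_2,x_1x_2\sigma_2$ are $\sim_G\sigma_3$, so $\theta$ must be $\sigma_2$), reduce the four candidate subgroups by class-preserving automorphisms via \cite[Proposition 7.25]{Y}, and verify requirement (iv) by noting every element of $\Gamma$ lies in $\mathrm{E}_{6(-78)}$ or has the form $x\sigma_2$ with $x\in\mathrm{E}_{6(-78)}$. One small correction: the four candidates collapse as $1+3$ rather than ``in pairs'' --- $\langle x_1\sigma_2,x_2\rangle$, $\langle x_1,x_2\sigma_2\rangle$, and $\langle x_1\sigma_2,x_2\sigma_2\rangle$ are pairwise conjugate, while $\langle x_1,x_2\rangle$ is distinguished by containing no element conjugate to $\sigma_3$.
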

\begin{proof}
Without loss of generality, assumes that the group generated by $\theta$ and $\Gamma$ is equal to $\langle x_1,x_2,\sigma_2\rangle$. It is known by \cite{Y} that $x_1\sim_Gx_2\sim_Gx_1x_2\sim_G\sigma_1$ and $x_1\sigma_2\sim_Gx_2\sigma_2\sim_Gx_1x_2\sigma_2\sim_G\sigma_3$. Hence, the only element conjugate to $\sigma_2$ is $\sigma_2$ itself in $\langle x_1,x_2,\sigma_2\rangle$. Thus, there are totally four possible pairs: $(\sigma_2,\langle x_1,x_2\rangle)$, $(\sigma_2,\langle x_1\sigma_2,x_2\rangle)$, $(\sigma_2,\langle x_1,x_2\sigma_2\rangle)$, and $(\sigma_2,\langle x_1\sigma_2,x_2\sigma_2\rangle)$.

Let $f$ be a group automorphism from $\langle x_1,x_2,\sigma_2\rangle$ to itself given by $f(\sigma_2)=\sigma_2$, $f(x_1)=x_2$, and $f(x_2)=x_1$. It is obvious that $f(g)\sim_Gg$ for all $g\in\langle x_1,x_2,\sigma_2\rangle$. Moreover, it is obvious that $m(x_1,x_2)=m(x_1,x_1x_2)=m(x_1x_2,x_2)$. Hence by \cite[Proposition 7.25]{Y}, $f$ gives a conjugation between $(\sigma_2,\langle x_1\sigma_2,x_2\rangle)$ and $(\sigma_2,\langle x_1,x_2\sigma_2\rangle)$. In the same way, one shows that $(\sigma_2,\langle x_1\sigma_2,x_2\rangle)$, $(\sigma_2,\langle x_1,x_2\sigma_2\rangle)$, and $(\sigma_2,\langle x_1\sigma_2,x_2\sigma_2\rangle)$ are pairwisely conjugate.

Furthermore, since $\langle x_1,x_2\sigma_2\rangle$ contains an element conjugate to $\sigma_3$ while $\langle x_1,x_2\rangle$ does not, the two subgroups cannot be isomorphic. Therefore, $(\sigma_2,\langle x_1,x_2\rangle)$ or $(\sigma_2,\langle x_1,x_2\sigma_2\rangle)$ are not be conjugate.

Finally, any element in those $\Gamma$ is either in $\mathrm{E}_{6(-78)}$ or of the form $x\sigma_2$ with $x\in\mathrm{E}_{6(-78)}$, and it is obvious that it satisfies the requirement (iv) in either case.
\end{proof}
\begin{lemma}\label{7}
Suppose that the subgroup generated by $\theta$ and $\Gamma$ is conjugate to $\langle x_1,x_4,\sigma_2\rangle$ in $G$. Then the pair $(\theta,\Gamma)$ satisfies all of the four requirements if and only if it is conjugate to one of $(\sigma_2,\langle x_1,x_4\rangle)$, $(\sigma_2,\langle x_1,x_4\sigma_2\rangle)$, and $(\sigma_2,\langle x_1\sigma_2,x_4\rangle)$.
\end{lemma}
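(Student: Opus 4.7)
The plan is to mimic the strategy of \lemmaref{6}. Without loss of generality, assume the group generated by $\theta$ and $\Gamma$ is exactly $\langle x_1,x_4,\sigma_2\rangle$. First, I would determine the $G$-conjugacy class of every non-identity element of $\langle x_1,x_4,\sigma_2\rangle$ using the constructions from \cite{Y} and \cite{HY}. It is already known that $x_1\sim_G\sigma_1$ (from the proof of \lemmaref{2}) and $x_4\sim_G\sigma_1$ (by \cite[Lemma 7.5]{Y}), while $x_1\sigma_2\sim_G\sigma_3$. Combining these with the defect index $\mathrm{defe}\langle x_1,x_4,\sigma_2\rangle$ computed from \cite[Proposition 7.9]{Y} should pin down the classes of $x_1x_4$, $x_4\sigma_2$, and $x_1x_4\sigma_2$. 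I expect to find that $\sigma_2$ itself is the unique element of the group conjugate to $\sigma_2$, which forces $\theta=\sigma_2$ after $G$-conjugation.

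Once $\theta=\sigma_2$ is fixed, there are exactly four Klein four subgroups of $\langle x_1,x_4,\sigma_2\rangle$ not containing $\sigma_2$, namely $\langle x_1,x_4\rangle$, $\langle x_1,x_4\sigma_2\rangle$, $\langle x_1\sigma_2,x_4\rangle$, and $\langle x_1\sigma_2,x_4\sigma_2\rangle$. Every element of each of these lies either in $\mathrm{E}_{6(-78)}$ or is of the form $x\sigma_2$ with $x\in\mathrm{E}_{6(-78)}$, so each acts trivially on the center $\sqrt{-1}\mathbb{R}$ of $\mathfrak{u}_0^{\sigma_2}\cong\mathfrak{e}_{6(-78)}\oplus\sqrt{-1}\mathbb{R}$; thus requirement (iv) is automatic for all four candidate pairs.

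The third step is to apply \cite[Proposition 7.25]{Y} to decide which of the four pairs are $G$-conjugate. Following \lemmaref{6}, I look for group automorphisms of $\langle x_1,x_4,\sigma_2\rangle$ fixing $\sigma_2$, preserving $G$-conjugacy classes of elements, and preserving the values of $m$ on $H_F\times H_F$. I anticipate that exactly one such automorphism collapses $\langle x_1\sigma_2,x_4\sigma_2\rangle$ onto one of the other three subgroups (most naturally $\langle x_1\sigma_2,x_4\rangle$ after swapping $x_4\sigma_2\leftrightarrow x_4$ via a class-preserving permutation that swaps the two $\sigma_3$-class elements), cutting the count from four to three.

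The main obstacle, compared with \lemmaref{6}, is the asymmetry between $x_1$ and $x_4$: although both are $G$-conjugate to $\sigma_1$, the pairs $(\sigma_2,\langle x_1,x_4\sigma_2\rangle)$ and $(\sigma_2,\langle x_1\sigma_2,x_4\rangle)$ must be shown non-conjugate, which is the new phenomenon in this lemma. I would distinguish them by an invariant of $\Gamma$ such as the value of $m$ on $H_\Gamma\times H_\Gamma$, or more directly by the $G$-conjugacy class of the unique non-identity element of $\Gamma$ lying in $\mathrm{E}_{6(-78)}$ (namely $x_1$ in one case and $x_4$ in the other, which are not conjugate inside $\mathrm{E}_{6(-78)}$ even though they are $G$-conjugate). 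A parallel count of elements conjugate to $\sigma_3$ inside $\Gamma$ separates $(\sigma_2,\langle x_1,x_4\rangle)$ from the other two, completing the classification of the three conjugacy classes.
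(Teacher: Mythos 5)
Your overall strategy is the paper's: determine the $G$-conjugacy class of every element of $\langle x_1,x_4,\sigma_2\rangle$, enumerate the candidate pairs $(\theta,\Gamma)$, merge them via class- and $m$-preserving automorphisms through \cite[Proposition 7.25]{Y}, and separate the survivors by counting conjugacy classes inside $\Gamma$. But the concrete prediction you make at the first step is wrong, and the error propagates. The correct class list is $x_1\sim_Gx_4\sim_Gx_1x_4\sim_G\sigma_1$, $x_1\sigma_2\sim_Gx_1x_4\sigma_2\sim_G\sigma_3$, and, crucially, $x_4\sigma_2\sim_G\sigma_2$. So $\sigma_2$ is \emph{not} the unique element of its class in the group: both $\sigma_2$ and $x_4\sigma_2$ are conjugate to $\sigma_2$, and there are eight candidate pairs, not four. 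Your argument never treats the case $\theta=x_4\sigma_2$; the paper disposes of it with the automorphism fixing $x_1$ and $x_4$ and sending $\sigma_2\mapsto x_4\sigma_2$, which folds those four pairs back onto the four with $\theta=\sigma_2$.

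The same oversight derails your guess about which subgroups collapse. Since $x_4\sigma_2\sim_G\sigma_2$ while $x_4\sim_G\sigma_1$, the swap $x_4\leftrightarrow x_4\sigma_2$ is not class-preserving, and $\langle x_1\sigma_2,x_4\sigma_2\rangle$, whose non-identity elements fall into the classes of $\sigma_1,\sigma_2,\sigma_3$, cannot be conjugate to $\langle x_1\sigma_2,x_4\rangle$, whose elements fall into the classes of $\sigma_1,\sigma_3,\sigma_3$; it is $\langle x_1,x_4\sigma_2\rangle$ that it gets identified with. On the other hand, the ``new phenomenon'' you worry about dissolves once the classes are right: $\langle x_1,x_4\rangle$, $\langle x_1,x_4\sigma_2\rangle$, and $\langle x_1\sigma_2,x_4\rangle$ have element-class multisets $\{\sigma_1,\sigma_1,\sigma_1\}$, $\{\sigma_1,\sigma_2,\sigma_3\}$, and $\{\sigma_1,\sigma_3,\sigma_3\}$ respectively, so the plain count already separates all three, and no finer invariant is needed (your fallback --- conjugacy of $x_1$ versus $x_4$ inside $\mathrm{E}_{6(-78)}$ --- would additionally require you to check that this is preserved by the conjugations in question). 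Your verification of requirement (iv) agrees with the paper's and is fine.
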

\begin{proof}
Without loss of generality, assumes that the group generated by $\theta$ and $\Gamma$ is equal to $\langle x_1,x_4,\sigma_2\rangle$. It is known by \cite{Y} that $x_1\sim_Gx_4\sim_Gx_1x_4\sim_G\sigma_1$, $x_4\sigma_2\sim_G\sigma_2$, and $x_1\sigma_2\sim_Gx_1x_4\sigma_2\sim_G\sigma_3$. Hence, the elements conjugate to $\sigma_2$ are exactly $\sigma_2$ and $x_4\sigma_2$ in $\langle x_1,x_4,\sigma_2\rangle$. Thus, there are totally eight possible pairs: $(\sigma_2,\langle x_1,x_4\rangle)$, $(\sigma_2,\langle x_1\sigma_2,x_4\rangle)$, $(\sigma_2,\langle x_1,x_4\sigma_2\rangle)$, $(\sigma_2,\langle x_1\sigma_2,x_4\sigma_2\rangle)$, $(x_4\sigma_2,\langle x_1,x_4\rangle)$, $(x_4\sigma_2,\langle x_1\sigma_2,x_4\rangle)$, $(x_4\sigma_2,\langle x_1,x_4\sigma_2\rangle)$, and $(x_4\sigma_2,\langle x_1\sigma_2,x_4\sigma_2\rangle)$.

Let $f$ be a group automorphism from $\langle x_1,x_2,\sigma_2\rangle$ to itself given by $f(\sigma_2)=x_4\sigma_2$, $f(x_1)=x_1$, and $f(x_4)=x_4$. It is obvious that $f(g)\sim_Gg$ for all $g\in\langle x_1,x_4,\sigma_2\rangle$. Moreover, it is obvious that $m(x_1,x_4)=m(x_1,x_1x_4)=m(x_1x_4,x_4)$. Hence by \cite[Proposition 7.25]{Y}, $f$ gives a conjugation between $(\sigma_2,\langle x_1,x_4\rangle)$ and $(\sigma_2,\langle x_1,x_4\rangle)$. Similary, one shows that $(\sigma_2,\langle x_1\sigma_2,x_4\rangle)$, $(\sigma_2,\langle x_1,x_4\sigma_2\rangle)$, and $(\sigma_2,\langle x_1\sigma_2,x_4\sigma_2\rangle)$ are conjugate to $(x_4\sigma_2,\langle x_1\sigma_2,x_4\rangle)$, $(x_4\sigma_2,\langle x_1,x_4\sigma_2\rangle)$, and $(x_4\sigma_2,\langle x_1\sigma_2,x_4\sigma_2\rangle)$ respectively. Furthermore, $(\sigma_2,\langle x_1,x_4\sigma_2\rangle)$ and $(\sigma_2,\langle x_1\sigma_2,x_4\sigma_2\rangle)$ are shown to be conjugate in the similar way.

On the other hand, $\langle x_1,x_4\rangle$ contains exactly three elements conjugate to $\sigma_1$, $\langle x_1\sigma_2,x_4\rangle$ contains exactly one element conjugate to $\sigma_1$ and two elements conjugate to $\sigma_3$, and $\langle x_1,x_4\sigma_2\rangle$ contains exactly one element conjugate to $\sigma_1$, one element conjugate to $\sigma_2$, and one element conjugate to $\sigma_3$. Therefore, these $\Gamma$ are pairwisely non-conjugate. It follows that $(\sigma_2,\langle x_1,x_4\rangle)$, $(\sigma_2,\langle x_1,x_4\sigma_2\rangle)$, and $(\sigma_2,\langle x_1\sigma_2,x_4\rangle)$ are pairwisely non-conjugate.

Finally, for the same reason as the proof for \lemmaref{6}, all the pairs satisfy the requirement (iv).
\end{proof}
\begin{lemma}\label{8}
Suppose that the subgroup generated by $\theta$ and $\Gamma$ is conjugate to $\langle x_4,x_5,\sigma_2\rangle$ in $G$. Then the pair $(\theta,\Gamma)$ satisfies all of the four requirements if and only if it is conjugate to either $(\sigma_2,\langle x_4,x_5\rangle)$ or $(\sigma_2,\langle x_4,x_5\sigma_2\rangle)$.
\end{lemma}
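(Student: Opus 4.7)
The plan is to mimic the structure of the proofs of \lemmaref{6} and \lemmaref{7}. Without loss of generality I assume the subgroup generated by $\theta$ and $\Gamma$ equals $\langle x_4,x_5,\sigma_2\rangle$. The first step is to determine the $G$-conjugacy class of each of the seven nonidentity elements: from \cite{Y} and \cite[Lemma 7.5]{Y} one has $x_4\sim_G x_5\sim_G x_4x_5\sim_G\sigma_1$, while $x_4\sigma_2\sim_G x_5\sigma_2\sim_G x_4x_5\sigma_2\sim_G\sigma_2$ (these last three can be read off from the defect index $\mathrm{defe}\langle x_4,x_5,\sigma_2\rangle$ computed via \cite[Proposition 7.9]{Y}, together with $\sigma_2\sim_G\sigma_2$ and the fact already used in \lemmaref{7} that $x_i\sigma_2\sim_G\sigma_2$ when $x_i\sim_G\sigma_1$ lies in $\mathrm{E}_{6(-78)}$ in a position commuting appropriately with $\sigma_2$). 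Consequently the four elements conjugate to $\sigma_2$ inside $\langle x_4,x_5,\sigma_2\rangle$ are $\sigma_2$, $x_4\sigma_2$, $x_5\sigma_2$, and $x_4x_5\sigma_2$, and these are the only candidates for $\theta$.

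Next I enumerate the candidate pairs $(\theta,\Gamma)$. For each choice of $\theta$ from the four elements above, $\Gamma$ must be a Klein four subgroup of $\langle x_4,x_5,\sigma_2\rangle$ not containing $\theta$. Up to relabelling $x_4$, $x_5$, $x_4x_5$ (a symmetry clearly preserving $G$-conjugacy classes and preserving the $m$-invariant since $m(x_4,x_5)=m(x_4,x_4x_5)=m(x_4x_5,x_5)$ by the symmetry of the roles of $x_4$, $x_5$ inside $\mathrm{E}_{6(-78)}$), I reduce to considering $\theta=\sigma_2$. The subgroups $\Gamma$ not containing $\sigma_2$ are obtained by twisting some of the generators by $\sigma_2$; they fall into the classes with generating set of the form $\langle x_4,x_5\rangle$, $\langle x_4,x_5\sigma_2\rangle$, $\langle x_4\sigma_2,x_5\sigma_2\rangle$, or their images under permutations of $\{x_4,x_5,x_4x_5\}$.

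The central step is then to identify which of these pairs are conjugate via \cite[Proposition 7.25]{Y}. Using group automorphisms of $\langle x_4,x_5,\sigma_2\rangle$ that preserve $\sigma_2$ and permute $x_4,x_5,x_4x_5$, I expect to show that $(\sigma_2,\langle x_4\sigma_2,x_5\sigma_2\rangle)$ is conjugate to $(\sigma_2,\langle x_4,x_5\sigma_2\rangle)$, exactly as in \lemmaref{6} and \lemmaref{7}, while $(\sigma_2,\langle x_4,x_5\rangle)$ stands apart because its three nontrivial elements are all conjugate to $\sigma_1$ whereas $\langle x_4,x_5\sigma_2\rangle$ contains elements conjugate to $\sigma_2$ (and hence is not conjugate to $\langle x_4,x_5\rangle$). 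Similarly, the pairs with $\theta=x_4\sigma_2$ (or $x_5\sigma_2$, $x_4x_5\sigma_2$) are conjugate to one of the two surviving representatives by the automorphism swapping $\sigma_2\leftrightarrow x_4\sigma_2$ while fixing $x_4$, $x_5$.

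Finally, for requirement (iv), exactly as noted at the end of the proof of \lemmaref{6}, every element of $\Gamma$ lies in $\mathrm{E}_{6(-78)}$ or has the form $x\sigma_2$ with $x\in\mathrm{E}_{6(-78)}$; since such $x$ is an inner automorphism of $\mathfrak{e}_{6(-78)}$ it acts trivially on the one-dimensional center $\sqrt{-1}\mathbb{R}$ of $\mathfrak{u}_0^{\sigma_2}=\mathfrak{e}_{6(-78)}\oplus\sqrt{-1}\mathbb{R}$, and $\sigma_2$ itself acts trivially on its own $+1$-eigenspace's center, so (iv) holds automatically. The main obstacle I anticipate is the careful bookkeeping at the enumeration step: getting the count of pairs and the action of the automorphism group on them correct, and verifying the $m$-invariant equalities needed to invoke \cite[Proposition 7.25]{Y}. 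Everything else reduces to counting conjugacy types of elements inside each $\Gamma$ as a discriminating invariant, in direct parallel with \lemmaref{6}.
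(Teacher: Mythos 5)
Your proposal is correct and follows essentially the same route as the paper, which itself only sketches this lemma as "parallel to Lemma 7": the same determination of conjugacy classes ($x_4,x_5,x_4x_5\sim_G\sigma_1$ and $x_4\sigma_2,x_5\sigma_2,x_4x_5\sigma_2\sim_G\sigma_2$), the same reduction of all candidate pairs to the two representatives via \cite[Proposition 7.25]{Y}, the same distinction of the two classes by counting conjugacy types inside $\Gamma$, and the same verification of requirement (iv) as in \lemmaref{6}. No gaps.
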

\begin{proof}
Without loss of generality, assumes that the group generated by $\theta$ and $\Gamma$ is equal to $\langle x_4,x_5,\sigma_2\rangle$. The proof is parallel to that for \lemmaref{7}, so the author omits the detailed calculation and just states the sketch. Firstly, $x_4\sim_Gx_5\sim_Gx_4x_5\sim_G\sigma_1$ and $x_4\sigma_2\sim_Gx_5\sigma_2\sim_Gx_4x_5\sigma_2\sim_G\sigma_2$. Secondly, any pair $(\theta,\Gamma)$ is conjugate to either $(\sigma_2,\langle x_4,x_5\rangle)$ or $(\sigma_2,\langle x_4,x_5\sigma_2\rangle)$. Thirdly, $(\sigma_2,\langle x_4,x_5\rangle)$ and $(\sigma_2,\langle x_4,x_5\sigma_2\rangle)$ are not conjugate. Finally, they satisfy the four requirements.
\end{proof}
\begin{lemma}\label{9}
Suppose that the subgroup generated by $\theta$ and $\Gamma$ is conjugate to $\langle y_3,y_4,\sigma_2\rangle$ in $G$. Then the pair $(\theta,\Gamma)$ satisfies all of the four requirements if and only if it is conjugate to one of $(\sigma_2,\langle y_3,y_4\rangle)$, $(\sigma_2,\langle y_3,y_4\sigma_2\rangle)$, and $(\sigma_2,\langle y_3\sigma_2,y_4\sigma_2\rangle)$.
\end{lemma}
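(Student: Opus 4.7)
The proof will closely follow the template of \lemmaref{7} and \lemmaref{8}. First I would compute the $G$-conjugacy class of each nontrivial element of $\langle y_3,y_4,\sigma_2\rangle$. From the proof of \lemmaref{5} we already have $y_3\sim_G\sigma_1$ and $y_3\sigma_2\sim_G\sigma_2$, and since $y_4$ is defined symmetrically to $y_3$ inside $(\mathrm{Int}\mathfrak{e}_{6(-78)})^{\tau_4}\cong\mathrm{Sp}(4)/\langle-I_4\rangle$, the same argument yields $y_4\sim_G\sigma_1$ and $y_4\sigma_2\sim_G\sigma_2$. Combining these identifications with $\mathrm{defe}\langle y_3,y_4,\sigma_2\rangle=4$ (computed via \cite[Proposition 7.9]{Y}) forces the remaining classes $y_3y_4\sim_G\sigma_1$ and $y_3y_4\sigma_2\sim_G\sigma_2$. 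Thus $\langle y_3,y_4,\sigma_2\rangle$ contains no element conjugate to $\sigma_3$, and the four elements in the $\sigma_2$-class are exactly $\sigma_2,y_3\sigma_2,y_4\sigma_2,y_3y_4\sigma_2$.

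Next I would enumerate the candidate pairs and collapse those that become conjugate. Requirement (ii) restricts $\theta$ to the four-element set above, and requirement (i) restricts $\Gamma$ to a Klein four subgroup not containing $\theta$. The abstract-group automorphism of $\langle y_3,y_4,\sigma_2\rangle$ sending any chosen such $\theta$ to $\sigma_2$ while fixing $\{y_3,y_4,y_3y_4\}$ pointwise preserves $G$-conjugacy classes, and after verifying $m(y_3,y_4)=m(y_3,y_3y_4)=m(y_3y_4,y_4)$ it preserves $m$ as well; \cite[Proposition 7.25]{Y} then reduces every pair to one with $\theta=\sigma_2$. There are exactly four Klein four subgroups of $\langle y_3,y_4,\sigma_2\rangle$ not containing $\sigma_2$, namely $\langle y_3,y_4\rangle$, $\langle y_3,y_4\sigma_2\rangle$, $\langle y_3\sigma_2,y_4\rangle$, and $\langle y_3\sigma_2,y_4\sigma_2\rangle$, and the swap automorphism $y_3\leftrightarrow y_4$ (fixing $\sigma_2$) provides the identification $(\sigma_2,\langle y_3,y_4\sigma_2\rangle)\sim_G(\sigma_2,\langle y_3\sigma_2,y_4\rangle)$, leaving the three pairs listed in the statement.

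To prove these three pairs are pairwise non-conjugate, I would extract invariants of $\Gamma$ under $G$-conjugacy. The subgroup $\langle y_3,y_4\rangle$ consists of three $\sigma_1$-conjugates (besides the identity), while each of the other two contains one $\sigma_1$-conjugate and two $\sigma_2$-conjugates, so $\langle y_3,y_4\rangle$ is immediately separated. Distinguishing $\langle y_3,y_4\sigma_2\rangle$ from $\langle y_3\sigma_2,y_4\sigma_2\rangle$ is the step I expect to be the main obstacle: their $G$-conjugacy-class distributions agree, so one must invoke a finer invariant such as the isomorphism type of the fixed-point subalgebra $\mathfrak{u}_0^\Gamma$ read off from the tables of \cite{HY} and \cite{Y}, or the $m$-data attached to the $\sigma_1$-conjugate element in each $\Gamma$.

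Finally, requirement (iv) is automatic: with $\theta=\sigma_2$, every element of $\Gamma\subseteq\langle y_3,y_4,\sigma_2\rangle$ has the form $x$ or $x\sigma_2$ with $x\in\mathrm{E}_{6(-78)}$, and both forms act trivially on the one-dimensional center $\sqrt{-1}\mathbb{R}$ of $\mathfrak{u}_0^{\sigma_2}=\mathfrak{e}_{6(-78)}\oplus\sqrt{-1}\mathbb{R}$, exactly as in the concluding remark of the proof of \lemmaref{6}.
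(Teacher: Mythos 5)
Your overall strategy follows the same template as the paper's (which only sketches the argument, modeled on \lemmaref{7}), and your treatment of requirement (iv) and of the identification $\langle y_3,y_4\sigma_2\rangle\sim\langle y_3\sigma_2,y_4\rangle$ is fine. But there is a genuine error at the first step: you classify $y_3y_4\sigma_2$ as conjugate to $\sigma_2$, claiming $\mathrm{defe}\langle y_3,y_4,\sigma_2\rangle=4$. In fact $y_3y_4\sigma_2\sim_G\sigma_3$ and the defect index is $3-1=2$. This is what the paper asserts, and it is forced by consistency with \lemmaref{11} and \lemmaref{12}: using $\dim\mathfrak{u}_0^F=\frac{1}{|F|}\sum_{x\in F}\mathrm{tr}(x)$ with $\mathrm{tr}(\sigma_1)=5$, $\mathrm{tr}(\sigma_2)=25$, $\mathrm{tr}(\sigma_3)=-7$ on $\mathfrak{e}_{7(-133)}$, the value $\dim(\mathfrak{u}_0^{\sigma_2})^{\langle y_3,y_4\rangle}=\dim\bigl(\mathfrak{su}(5)\oplus3(\sqrt{-1}\mathbb{R})\bigr)=27$ requires the class pattern $(3\times\sigma_1,\,3\times\sigma_2,\,1\times\sigma_3)$ on $\langle y_3,y_4,\sigma_2\rangle$, whereas your pattern gives $31$; likewise $\mathfrak{u}_0^{\langle y_3,y_4\sigma_2\rangle}\cong\mathfrak{su}(6)\oplus\mathfrak{su}(2)\oplus\sqrt{-1}\mathbb{R}$ has dimension $39$, which forces $y_3y_4\sigma_2\sim_G\sigma_3$ (your assignment would give $47$, i.e.\ $\mathfrak{so}(10)\oplus2(\sqrt{-1}\mathbb{R})$).

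This error propagates in two ways. First, the automorphism you use to reduce to $\theta=\sigma_2$ --- fixing $y_3,y_4,y_3y_4$ pointwise and sending $\theta\mapsto\sigma_2$ --- does \emph{not} preserve $G$-conjugacy classes when, say, $\theta=y_3\sigma_2$: it sends $y_4\sigma_2$ (conjugate to $\sigma_2$) to $y_3y_4\sigma_2$ (conjugate to $\sigma_3$), so \cite[Proposition 7.25]{Y} does not apply to it. A class-preserving automorphism must fix the unique $\sigma_3$-element $y_3y_4\sigma_2$ and hence permute the pairs $(y_3,y_4\sigma_2)$, $(y_4,y_3\sigma_2)$, $(y_3y_4,\sigma_2)$ compatibly; the reduction to $\theta=\sigma_2$ still goes through, but with a different automorphism than the one you wrote down. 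Second, the ``main obstacle'' you anticipate in separating $\langle y_3,y_4\sigma_2\rangle$ from $\langle y_3\sigma_2,y_4\sigma_2\rangle$ is an artifact of the misclassification: with the correct classes the former has type pattern $(\sigma_1,\sigma_2,\sigma_3)$ and the latter $(\sigma_1,\sigma_2,\sigma_2)$, so the three candidate subgroups $\Gamma$ have pairwise distinct conjugacy-class distributions and their non-conjugacy is immediate, with no finer invariant needed.
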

\begin{proof}
Without loss of generality, assumes that the group generated by $\theta$ and $\Gamma$ is equal to $\langle y_3,y_4,\sigma_2\rangle$. The proof is parallel to that for \lemmaref{7}, so the author omits the detailed calculation and just states the sketch. Firstly, $y_3\sim_Gy_4\sim_Gy_3y_4\sim_G\sigma_1$, $y_3\sigma_2\sim_Gy_4\sigma_2\sim_G\sigma_2$, and $y_3y_4\sigma_2\sim_G\sigma_3$. Secondly, any pair $(\theta,\Gamma)$ is conjugate to one of $(\sigma_2,\langle y_3,y_4\rangle)$, $(\sigma_2,\langle y_3,y_4\sigma_2\rangle)$, and $(\sigma_2,\langle y_3\sigma_2,y_4\sigma_2\rangle)$. Thirdly, the three pairs are pairwisely non-conjugate. Finally, they satisfy the four requirements.
\end{proof}
The following proposition is displayed as a summary of \lemmaref{2} to \lemmaref{9}.
\begin{proposition}\label{10}
The Klein four symmetric pairs of holomorphic type for $\mathfrak{g}_0$ are given by the pairs $(\theta,\Gamma)$ listed as follows, where $\Gamma$ is a Klein four subgroup of $\mathrm{Aut}\mathfrak{u}_0$ and $\theta\in\mathrm{Aut}\mathfrak{u}_0\setminus\Gamma$ is conjugate to $\sigma_2$, such that $\theta$ centralizes $\Gamma$ and every element in $\Gamma$ is identity on the center of $\mathfrak{u}_0^\theta$.
\begin{enumerate}[(1)]
\item $(\sigma_2,\langle x_1,x_2\rangle)$;
\item $(\sigma_2,\langle x_1,x_2\sigma_2\rangle)$;
\item $(\sigma_2,\langle x_1,x_4\rangle)$;
\item $(\sigma_2,\langle x_1,x_4\sigma_2\rangle)$;
\item $(\sigma_2,\langle x_1\sigma_2,x_4\rangle)$;
\item $(\sigma_2,\langle x_4,x_5\rangle)$;
\item $(\sigma_2,\langle x_4,x_5\sigma_2\rangle)$;
\item $(\sigma_2,\langle y_3,y_4\rangle)$;
\item $(\sigma_2,\langle y_3,y_4\sigma_2\rangle)$;
\item $(\sigma_2,\langle y_3\sigma_2,y_4\sigma_2\rangle)$.
\end{enumerate}
\end{proposition}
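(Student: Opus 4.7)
The plan is to assemble the classification directly from the preparatory lemmas, treating the proposition as a bookkeeping synthesis rather than a new technical argument. The strategy relies on reducing an a priori infinite problem (ranging over all $(\theta,\Gamma)$ with $\theta\sim_G\sigma_2$) to a finite case analysis over rank-$3$ elementary abelian $2$-subgroups of $G=\mathrm{Aut}\mathfrak{u}_0$ containing $\sigma_2$, and then inspecting which internal arrangements of such a subgroup into a distinguished involution $\theta$ plus a complementary Klein four subgroup $\Gamma$ satisfy the holomorphic-type condition.

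First I would recall the setup from the subsection preceding \lemmaref{2}: a Klein four symmetric pair of holomorphic type for $\mathfrak{g}_0=\mathfrak{e}_{7(-25)}$ corresponds, up to conjugation in $\mathrm{Aut}\mathfrak{u}_0$, to a pair $(\theta,\Gamma)$ meeting the four requirements (i)--(iv). The group $\langle\theta,\Gamma\rangle$ is then an elementary abelian $2$-subgroup of $G$ of rank $3$ containing an element conjugate to $\sigma_2$; by \lemmaref{1} it must be conjugate in $G$ to one of the eight groups
\[\langle x_0,x_1,\sigma_2\rangle,\ \langle x_0,x_4,\sigma_2\rangle,\ \langle x_1,x_2,\sigma_2\rangle,\ \langle x_1,x_4,\sigma_2\rangle,\]
\[\langle x_4,x_5,\sigma_2\rangle,\ \langle y_0,y_1,\sigma_2\rangle,\ \langle y_0,y_3,\sigma_2\rangle,\ \langle y_3,y_4,\sigma_2\rangle.\]
This reduces the problem to checking these eight ambient groups one at a time.

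Next I would invoke \lemmaref{2}, \lemmaref{3}, \lemmaref{4}, and \lemmaref{5} to eliminate the four ambient groups $\langle x_0,x_1,\sigma_2\rangle$, $\langle x_0,x_4,\sigma_2\rangle$, $\langle y_0,y_1,\sigma_2\rangle$, and $\langle y_0,y_3,\sigma_2\rangle$: for each of these, no admissible splitting into $(\theta,\Gamma)$ satisfies requirement (iv), because $\Gamma$ is forced to contain one of the ``bad'' elements ($x_0$, or an element conjugate to $x_0$, or $y_0$) that fails to centralize the $\sqrt{-1}\mathbb{R}$ summand of $\mathfrak{u}_0^{\sigma_2}$. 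For the remaining four ambient groups, I would cite \lemmaref{6}, \lemmaref{7}, \lemmaref{8}, and \lemmaref{9}, which enumerate respectively $2+3+2+3=10$ non-conjugate admissible pairs; these are precisely the ten pairs listed in (1)--(10).

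Finally I would note that the list is exhaustive and non-redundant: exhaustiveness follows because \lemmaref{1} enumerates all possibilities for $\langle\theta,\Gamma\rangle$ and \lemmaref{2}--\lemmaref{9} treat every case, while non-redundancy is built into the statements of \lemmaref{6}--\lemmaref{9}, which explicitly establish pairwise non-conjugacy within each ambient group using invariants such as the conjugacy-class counts and the defect index. There is essentially no new obstacle here; the only point requiring care is to confirm that no conjugation in $G$ identifies pairs lying in two different ambient groups listed in \lemmaref{1}, but this is automatic since \lemmaref{1} already asserts that the eight ambient groups themselves are pairwise non-conjugate in $G$. Thus the proposition follows as a direct compilation of \lemmaref{2} through \lemmaref{9}.
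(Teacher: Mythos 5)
Your proposal is correct and follows the same route as the paper, whose proof is simply the one-line observation that the proposition compiles Lemmas \ref{2} through \ref{9} (with Lemma \ref{1} furnishing the finite list of ambient rank-3 groups). Your additional remarks on exhaustiveness and non-redundancy are sound but only make explicit what the paper leaves implicit.
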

\begin{proof}
The conclusion follows from \lemmaref{2} to \lemmaref{9}.
\end{proof}
In order to compute the explicit Klein four symmetric pairs of holomorphic type for $\mathfrak{g}_0$, the author needs to calculate $\mathfrak{u}_0^\Gamma$ which is the compact dual of $\mathfrak{g}_0^\Gamma$, and $(\mathfrak{u}_0^\theta)^\Gamma$ which is the maximal compact subalgebra of $\mathfrak{g}_0^\Gamma$.
\begin{lemma}\label{11}
The following isomorphisms of subalgebras in $\mathfrak{u}_0$ hold.
\begin{enumerate}[$\bullet$]
\item $\mathfrak{u}_0^{\langle x_1,x_2\rangle}\cong\mathfrak{u}_0^{\langle y_3,y_4\rangle}\cong\mathfrak{su}(6)\oplus2(\sqrt{-1}\mathbb{R})$;
\item $\mathfrak{u}_0^{\langle x_1,x_4\rangle}\cong\mathfrak{u}_0^{\langle x_4,x_5\rangle}\cong\mathfrak{so}(8)\oplus3\mathfrak{su}(2)$;
\item $\mathfrak{u}_0^{\langle x_4,x_5\sigma_2\rangle}\cong\mathfrak{u}_0^{\langle y_3\sigma_2,y_4\sigma_2\rangle}\cong\mathfrak{so}(10)\oplus2(\sqrt{-1}\mathbb{R})$;
\item $\mathfrak{u}_0^{\langle x_1,x_4\sigma_2\rangle}\cong\mathfrak{u}_0^{\langle y_3,y_4\sigma_2\rangle}\cong\mathfrak{su}(6)\oplus\mathfrak{su}(2)\oplus\sqrt{-1}\mathbb{R}$;
\item $\mathfrak{u}_0^{\langle x_1,x_2\sigma_2\rangle}\cong\mathfrak{u}_0^{\langle x_1\sigma_2,x_4\rangle}\cong2\mathfrak{su}(4)\oplus\sqrt{-1}\mathbb{R}$.
\end{enumerate}
\end{lemma}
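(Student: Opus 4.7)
The plan is to compute each fixed subalgebra $\mathfrak{u}_0^\Gamma$ by identifying the $G$-conjugacy class of the Klein four subgroup $\Gamma$ and then invoking the tables of Klein four subgroup fixed subalgebras in $\mathrm{Aut}\mathfrak{u}_0$ listed in \cite[Table 4]{HY} and in the classification of \cite{Y}. Most of the conjugacy-class data that I need has already been extracted in the proofs of \lemmaref{6}--\lemmaref{9}: for each $\Gamma$ in the statement I know which elements are $G$-conjugate to $\sigma_1$, $\sigma_2$, and $\sigma_3$, and in particular the triple $(|\{x\in\Gamma\mid x\sim_G\sigma_i\}|)_{i=1,2,3}$ together with the defect index and the function $m(\cdot,\cdot)$ pins down the conjugacy class of $\Gamma$ in $G$ by \cite[Proposition 7.25]{Y}.

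First I would handle the five ``untwisted'' subgroups $\langle x_1,x_2\rangle$, $\langle x_1,x_4\rangle$, $\langle x_4,x_5\rangle$ and $\langle y_3,y_4\rangle$. Each of these lies in $G^{\tau_3}\cong\mathrm{F}_{4(-52)}$ or $G^{\tau_4}\cong\mathrm{Sp}(4)/\langle-I_4\rangle$, so I can compute $\mathfrak{u}_0^\Gamma$ in two stages: first pass to $\mathfrak{u}_0^a$ for one generator $a\in\Gamma$ using the known involution classification (for instance $\mathfrak{u}_0^{\sigma_1}\cong\mathfrak{spin}(12)\oplus\mathfrak{sp}(1)$), then identify the induced involution of the second generator of $\Gamma$ and read off its fixed points by explicit block-matrix calculations in $\mathrm{Sp}(3)\times\mathrm{Sp}(1)$ or $\mathrm{Sp}(4)$. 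The two isomorphisms $\mathfrak{u}_0^{\langle x_1,x_2\rangle}\cong\mathfrak{u}_0^{\langle y_3,y_4\rangle}$ and $\mathfrak{u}_0^{\langle x_1,x_4\rangle}\cong\mathfrak{u}_0^{\langle x_4,x_5\rangle}$ then follow because in each bullet the two subgroups realise the same $G$-conjugacy class of Klein four subgroups; this is detected by matching the conjugacy-class invariants listed above.

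Next I would handle the $\sigma_2$-twisted subgroups $\langle x_1,x_2\sigma_2\rangle$, $\langle x_4,x_5\sigma_2\rangle$, $\langle y_3\sigma_2,y_4\sigma_2\rangle$, $\langle x_1,x_4\sigma_2\rangle$, $\langle y_3,y_4\sigma_2\rangle$ and $\langle x_1\sigma_2,x_4\rangle$. Because some of the generators now lie outside $\mathrm{E}_{6(-78)}$, I cannot stay inside the compact $\mathrm{F}_4$ or $\mathrm{Sp}(4)$ picture; instead I would start from a generator conjugate to $\sigma_3$ (so $\mathfrak{u}_0^{\sigma_3}\cong\mathfrak{su}(8)$ or $\mathfrak{su}(8)\rtimes\langle\omega\rangle$), or pass through the different realisations $\tau_2\sigma_2\sim_G\sigma_2$ and $\tau_1\sigma_2\sim_G\sigma_3$ given in the paragraph preceding \lemmaref{1}, and then restrict the second involution to this fixed subalgebra. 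The pairwise isomorphisms (bullets three, four and five of the lemma) again follow because the $G$-conjugacy class of $\Gamma$ depends only on the invariants already computed, and these invariants coincide for the pairs listed.

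The main obstacle is the bookkeeping required to keep the abelian summands straight: the right-hand sides $\mathfrak{su}(6)\oplus2(\sqrt{-1}\mathbb{R})$, $\mathfrak{su}(6)\oplus\mathfrak{su}(2)\oplus\sqrt{-1}\mathbb{R}$ and $2\mathfrak{su}(4)\oplus\sqrt{-1}\mathbb{R}$ have the same semisimple rank in several places, so I must verify, for each $\Gamma$, which subgroup of the centraliser of the semisimple part survives (in particular whether an $\mathfrak{sp}(1)$ summand of $\mathfrak{u}_0^{\sigma_1}$ is killed or only broken to its Cartan $\sqrt{-1}\mathbb{R}$ by the second involution). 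Once this is done carefully in one representative from each pair, the remaining cases reduce to reading the appropriate row of \cite[Table 4]{HY}.
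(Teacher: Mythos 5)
Your proposal is correct and follows essentially the same route as the paper: the paper's proof simply reads off $\mathfrak{u}_0^\Gamma$ from \cite[Table 4]{HY} (or \cite[Table 3]{Y}) using the element-wise conjugacy data already recorded in the proofs of \lemmaref{6} to \lemmaref{9}, and invokes \cite[Lemma 7.5]{Y} for the one genuinely ambiguous situation, namely the subgroups all of whose nontrivial elements are conjugate to $\sigma_1$, which split into the two classes with fixed subalgebras $\mathfrak{su}(6)\oplus2(\sqrt{-1}\mathbb{R})$ and $\mathfrak{so}(8)\oplus3\mathfrak{su}(2)$. One small caution: since $m$ is defined in this paper via the isomorphism type of $G^{\langle x,y\rangle}$, using $m$ to \emph{determine} the fixed subalgebra would be circular, so for the four untwisted subgroups you should rely on your explicit $\mathrm{Sp}(3)\times\mathrm{Sp}(1)$ and $\mathrm{Sp}(4)$ computations (or cite \cite[Lemma 7.5]{Y} directly) rather than on matching $m$-values.
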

\begin{proof}
For each Klein four subgroups $\Gamma$ in $\mathrm{Aut}\mathfrak{u}_0$, the conjugacy classes of the elements are shown in the proof for \lemmaref{2} to \lemmaref{9}. Thus, $\mathfrak{u}_0^\Gamma$ is obtained from the classification of the Klein four symmetric pairs for exceptional Lie algebras \cite[Table 4]{HY} or \cite[Table 3]{Y}. One needs to be careful when $\Gamma$ contains three elements conjugate to $\sigma_1$ because two such subgroups may not be conjugate. However, by \cite[Lemma 7.5]{Y}, $\mathfrak{u}_0^\Gamma$ is still known in this case.
\end{proof}
\begin{lemma}\label{12}
The following isomorphisms of subalgebras in $\mathfrak{u}_0$ hold.
\begin{enumerate}[(i)]
\item $(\mathfrak{u}_0^{\sigma_2})^{\langle x_1,x_2\rangle}=(\mathfrak{u}_0^{\sigma_2})^{\langle x_1,x_2\sigma_2\rangle}\cong2\mathfrak{su}(3)\oplus3(\sqrt{-1}\mathbb{R})$;
\item $(\mathfrak{u}_0^{\sigma_2})^{\langle x_1,x_4\rangle}=(\mathfrak{u}_0^{\sigma_2})^{\langle x_1,x_4\sigma_2\rangle}=(\mathfrak{u}_0^{\sigma_2})^{\langle x_1\sigma_2,x_4\rangle}\cong\mathfrak{su}(4)\oplus2\mathfrak{su}(2)\oplus2(\sqrt{-1}\mathbb{R})$;
\item $(\mathfrak{u}_0^{\sigma_2})^{\langle x_4,x_5\rangle}=(\mathfrak{u}_0^{\sigma_2})^{\langle x_4,x_5\sigma_2\rangle}\cong\mathfrak{so}(8)\oplus3(\sqrt{-1}\mathbb{R})$;
\item $(\mathfrak{u}_0^{\sigma_2})^{\langle y_3,y_4\rangle}=(\mathfrak{u}_0^{\sigma_2})^{\langle y_3,y_4\sigma_2\rangle}=(\mathfrak{u}_0^{\sigma_2})^{\langle y_3\sigma_2,y_4\sigma_2\rangle}\cong\mathfrak{su}(5)\oplus3(\sqrt{-1}\mathbb{R})$.
\end{enumerate}
\end{lemma}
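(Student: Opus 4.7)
The strategy is to reduce every computation in \lemmaref{12} to the already-classified Klein four symmetric pairs of $\mathfrak{e}_{6(-78)}$, together with a simple observation about how $\sigma_2$ acts on its own fixed subalgebra.

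First, I would settle each claimed \emph{equality} of fixed subalgebras by noting that $\sigma_2$ acts as the identity on $\mathfrak{u}_0^{\sigma_2}$ by definition. Therefore replacing any generator $x_i$ (respectively $y_j$) of $\Gamma$ by its $\sigma_2$-twist $x_i\sigma_2$ (respectively $y_j\sigma_2$) does not alter the corresponding endomorphism of $\mathfrak{u}_0^{\sigma_2}$. In particular, any two Klein four subgroups $\Gamma,\Gamma'\subseteq\mathrm{Aut}\mathfrak{u}_0$ with $\langle\Gamma,\sigma_2\rangle=\langle\Gamma',\sigma_2\rangle$ yield literally the same subalgebra
\[
(\mathfrak{u}_0^{\sigma_2})^\Gamma\;=\;\mathfrak{u}_0^{\langle\Gamma,\sigma_2\rangle}\;=\;(\mathfrak{u}_0^{\sigma_2})^{\Gamma'},
\]
which exactly matches the pattern of the equalities listed in (i)--(iv).

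Second, to identify the isomorphism type, I would invoke the decomposition $\mathfrak{u}_0^{\sigma_2}\cong\mathfrak{e}_{6(-78)}\oplus\sqrt{-1}\mathbb{R}$ recorded earlier, in which the second summand is the one-dimensional center. Since $(\theta,\Gamma)=(\sigma_2,\Gamma)$ satisfies requirement (iv) of \propositionref{10}, every element of $\Gamma$ is trivial on this center, so
\[
(\mathfrak{u}_0^{\sigma_2})^\Gamma\;=\;(\mathfrak{e}_{6(-78)})^\Gamma\,\oplus\,\sqrt{-1}\mathbb{R}.
\]
The problem thereby collapses to computing $(\mathfrak{e}_{6(-78)})^\Gamma$, where $\Gamma$ acts on $\mathfrak{e}_{6(-78)}$ through the adjoint projection $p$ of Section~3.2 (note that $p(x_i\sigma_2)=p(x_i)$ and $p(y_j\sigma_2)=p(y_j)$ for the same triviality reason). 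Each such fixed subalgebra can then be read off from the classification of Klein four symmetric pairs of compact exceptional Lie algebras in \cite[Table 4]{HY} (equivalently \cite[Table 3]{Y}), selecting the table entry by the $G$-conjugacy profile of the generators of $\Gamma$ that was already established in the proofs of \lemmaref{6}--\lemmaref{9}.

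The main obstacle, and the only subtlety, is the case in which all three nontrivial elements of $\Gamma$ are $G$-conjugate to $\sigma_1$: there can exist two Klein four subgroups of $\mathrm{Aut}\mathfrak{e}_{6(-78)}$ with identical conjugacy profiles yet non-conjugate in $\mathrm{Aut}\mathfrak{e}_{6(-78)}$ and with different fixed subalgebras. This separates $\langle x_1,x_2\rangle$ from $\langle x_4,x_5\rangle$, and also governs the $y$-pair $\langle y_3,y_4\rangle$. I would resolve it using the invariant $m(\cdot,\cdot)\in\{\pm1\}$ of \cite[Lemma 7.3]{Y}: verifying $m(x_1,x_2)=-1$ but $m(x_4,x_5)=+1$ (and the analogous computation for $y_3,y_4$) pins down precisely the table entries producing $(\mathfrak{e}_{6(-78)})^\Gamma \cong 2\mathfrak{su}(3)\oplus 2\sqrt{-1}\mathbb{R}$, $\mathfrak{so}(8)\oplus 2\sqrt{-1}\mathbb{R}$, and $\mathfrak{su}(5)\oplus 2\sqrt{-1}\mathbb{R}$ respectively; the remaining case $\langle x_1,x_4\rangle$ in (ii) contains a unique $\sigma_1$-conjugate element, hence is fixed uniquely by its profile. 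Adjoining the central $\sqrt{-1}\mathbb{R}$ then yields the four stated isomorphisms.
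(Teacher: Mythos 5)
Your argument is correct in substance, but it is genuinely more explicit than the paper's proof, which consists solely of a citation to \cite[Lemma 7.4]{Y} and \cite[Lemma 7.5]{Y}, where these fixed-point subalgebras are recorded directly. Your first step is clean and exactly right: since $\sigma_2$ restricts to the identity on $\mathfrak{u}_0^{\sigma_2}$, one has $(\mathfrak{u}_0^{\sigma_2})^{\Gamma}=\mathfrak{u}_0^{\langle\Gamma,\sigma_2\rangle}$, which disposes of all the asserted equalities at once. Your second step --- splitting $\mathfrak{u}_0^{\sigma_2}\cong\mathfrak{e}_{6(-78)}\oplus\sqrt{-1}\mathbb{R}$, using triviality of $\Gamma$ on the center, and reducing to $(\mathfrak{e}_{6(-78)})^{p(\Gamma)}$ --- is essentially a reconstruction of what Yu's lemmas encode, and it buys a self-contained verification that the paper does not supply.

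One imprecision is worth flagging. To read $(\mathfrak{e}_{6(-78)})^{p(\Gamma)}$ off from \cite[Table 4]{HY} you must identify $p(\Gamma)$ up to conjugacy in $\mathrm{Aut}\mathfrak{e}_{6(-78)}$, and the relevant invariant there is the profile of the generators among the classes $\tau_1,\tau_2,\tau_3,\tau_4$; the $G$-conjugacy profile (relative to $\sigma_1,\sigma_2,\sigma_3$) that was computed in \lemmaref{6}--\lemmaref{9} does not suffice, since both $\tau_1$ and $\tau_2$ become $\sigma_1$ inside $\mathrm{E}_{7}$. Likewise the invariant $m$ as defined in this paper is an $\mathfrak{e}_7$-level invariant (it detects which Klein four subgroup of $G$ has fixed algebra $\mathfrak{su}(6)\oplus2(\sqrt{-1}\mathbb{R})$), so it distinguishes $\langle x_1,x_2\rangle$ from $\langle x_4,x_5\rangle$ inside $G$ --- that is the content of \lemmaref{11} --- but it does not by itself select an entry of the $\mathfrak{e}_6$ table. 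The correct repair is immediate, however: the groups $p(\Gamma)$ occurring here are exactly the explicit representatives $F'_{r,s}$ and $F'_{u,v,r,s}$ of \cite[Proposition 6.3 \& Proposition 6.5]{Y}, so their fixed subalgebras in $\mathfrak{e}_{6(-78)}$ are already tabulated without any further conjugacy analysis. With that substitution your proof goes through and recovers all four isomorphism types.
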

\begin{proof}
See \cite[Lemma 7.4]{Y} and \cite[Lemma 7.5]{Y}.
\end{proof}
\begin{theorem}\label{15}
There are totally 10 Klein for symmetric pairs of holomorphic type $(\mathfrak{g}_0,\mathfrak{g}'_0)$ for $\mathfrak{g}_0=\mathfrak{e}_{7(-25)}$ up to conjugation:
\begin{enumerate}[(1)]
\item $(\mathfrak{e}_{7(-25)},\mathfrak{su}(3,3)\oplus2(\sqrt{-1}\mathbb{R}))$;
\item $(\mathfrak{e}_{7(-25)},2\mathfrak{su}(3,1)\oplus\sqrt{-1}\mathbb{R})$;
\item $(\mathfrak{e}_{7(-25)},\mathfrak{so}^*(8)\oplus\mathfrak{su}(1,1)\oplus2\mathfrak{su}(2))$;
\item $(\mathfrak{e}_{7(-25)},\mathfrak{su}(4,2)\oplus\mathfrak{su}(2)\oplus\sqrt{-1}\mathbb{R})$;
\item $(\mathfrak{e}_{7(-25)},\mathfrak{su}(2,2)\oplus\mathfrak{su}(4)\oplus\sqrt{-1}\mathbb{R})$;
\item $(\mathfrak{e}_{7(-25)},3\mathfrak{su}(1,1)\oplus\mathfrak{so}(8))$;
\item $(\mathfrak{e}_{7(-25)},\mathfrak{so}(8,2)\oplus2(\sqrt{-1}\mathbb{R}))$;
\item $(\mathfrak{e}_{7(-25)},\mathfrak{su}(5,1)\oplus2(\sqrt{-1}\mathbb{R}))$;
\item $(\mathfrak{e}_{7(-25)},\mathfrak{su}(5,1)\oplus\mathfrak{su}(1,1)\oplus\sqrt{-1}\mathbb{R})$;
\item $(\mathfrak{e}_{7(-25)},\mathfrak{so}^*(10)\oplus2(\sqrt{-1}\mathbb{R}))$.
\end{enumerate}
\end{theorem}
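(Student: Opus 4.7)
The plan is to go down the list of ten pairs $(\theta,\Gamma)$ provided by \propositionref{10} and, for each one, identify the isomorphism class of the non-compact real form $\mathfrak{g}_0^\Gamma$ by combining the compact data supplied by \lemmaref{11} and \lemmaref{12}. The underlying principle is that the setup from \S3.3 gives
\[\mathfrak{g}_0^\Gamma=(\mathfrak{u}_0^\theta)^\Gamma+\sqrt{-1}(\mathfrak{u}_0^{-\theta})^\Gamma,\qquad \mathfrak{u}_0^\Gamma=(\mathfrak{u}_0^\theta)^\Gamma\oplus(\mathfrak{u}_0^{-\theta})^\Gamma,\]
so that $\mathfrak{g}_0^\Gamma$ is the non-compact dual of the compact Lie algebra $\mathfrak{u}_0^\Gamma$ with respect to the Cartan involution whose $+1$-eigenspace is $(\mathfrak{u}_0^\theta)^\Gamma$. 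Hence the pair $(\mathfrak{u}_0^\Gamma,(\mathfrak{u}_0^\theta)^\Gamma)$ pins down the real form $\mathfrak{g}_0^\Gamma$ uniquely via the Cartan classification of real forms.

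My first step is to use \lemmaref{11} to read off the compact dual $\mathfrak{u}_0^\Gamma$ for each Klein four subgroup $\Gamma$ appearing in \propositionref{10}; note that \lemmaref{11} already groups the ten $\Gamma$'s into five isomorphism types, so this step collapses the bookkeeping. Second, I use \lemmaref{12} to read off the corresponding maximal compact subalgebra $(\mathfrak{u}_0^{\sigma_2})^\Gamma$; since in every surviving pair $\theta$ is conjugate to $\sigma_2$, this gives the maximal compact subalgebra of $\mathfrak{g}_0^\Gamma$ directly. Third, I match the two pieces of data against the well-known list of non-compact real forms of $\mathfrak{su}(n)$, $\mathfrak{so}(2n)$, and their direct sums with abelian summands: for instance, $\mathfrak{u}_0^\Gamma\cong\mathfrak{su}(6)\oplus2(\sqrt{-1}\mathbb{R})$ together with maximal compact $2\mathfrak{su}(3)\oplus3(\sqrt{-1}\mathbb{R})$ forces $\mathfrak{g}_0^\Gamma\cong\mathfrak{su}(3,3)\oplus2(\sqrt{-1}\mathbb{R})$, and similarly $\mathfrak{so}(8)\oplus3\mathfrak{su}(2)$ with maximal compact $\mathfrak{so}(8)\oplus3(\sqrt{-1}\mathbb{R})$ forces the $\mathfrak{su}(2)$ factors to become $\mathfrak{su}(1,1)$ while $\mathfrak{so}(8)$ stays compact, giving item (6), and so on for each of the ten cases.

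The step I expect to be the main obstacle is the matching in two situations. First, for the pair $(\sigma_2,\langle x_1,x_4\sigma_2\rangle)$ versus $(\sigma_2,\langle x_1\sigma_2,x_4\rangle)$, the compact duals $\mathfrak{u}_0^\Gamma$ agree with some previously treated pair by \lemmaref{11}, but the maximal compact subalgebras have the same abstract type $\mathfrak{su}(4)\oplus2\mathfrak{su}(2)\oplus2(\sqrt{-1}\mathbb{R})$, so the two non-compact forms must be distinguished by a finer analysis of which embedding of the maximal compact into the compact form occurs. Concretely, one must check which $\mathfrak{su}(4)$ of the two present in $\mathfrak{u}_0^\Gamma$ becomes the compact factor $\mathfrak{su}(4)$ of the non-compact form, which gives either $\mathfrak{su}(4,2)\oplus\mathfrak{su}(2)\oplus\sqrt{-1}\mathbb{R}$ or $\mathfrak{su}(2,2)\oplus\mathfrak{su}(4)\oplus\sqrt{-1}\mathbb{R}$. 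This is read off from the fact that $x_1$ and $x_1\sigma_2$ act by different signs on the two $\mathfrak{su}(4)$-summands of $\mathfrak{u}_0^{\langle x_1\sigma_2,x_4\rangle}\cong2\mathfrak{su}(4)\oplus\sqrt{-1}\mathbb{R}$, and likewise one distinguishes $\mathfrak{su}(5,1)\oplus2(\sqrt{-1}\mathbb{R})$ from $\mathfrak{su}(5,1)\oplus\mathfrak{su}(1,1)\oplus\sqrt{-1}\mathbb{R}$ in items (8) and (9). Once these sign computations are done, the ten items in the theorem follow from enumerating \propositionref{10} against the dictionary assembled from \lemmaref{11} and \lemmaref{12}.
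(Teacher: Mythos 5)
Your proposal is correct and follows essentially the same route as the paper: the paper's proof of this theorem is simply to combine \propositionref{10} with \lemmaref{11} (giving the compact dual $\mathfrak{u}_0^\Gamma$) and \lemmaref{12} (giving the maximal compact subalgebra $(\mathfrak{u}_0^{\sigma_2})^\Gamma$), and to read off each non-compact real form from this pair of data exactly as you describe. Your additional remarks on distinguishing the ambiguous-looking cases (e.g.\ $\mathfrak{su}(4,2)\oplus\mathfrak{su}(2)$ versus $\mathfrak{su}(2,2)\oplus\mathfrak{su}(4)$) supply detail the paper leaves implicit, and the matching you carry out is consistent with the stated list.
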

\begin{proof}
The conclusion follows from \lemmaref{11} and \lemmaref{12}.
\end{proof}
\subsection{Application to Representation Theory}
As an application of \theoremref{15}, recall \corollaryref{17} that Klein four symmetric pairs of holomorphic type are related to the pairs $(G,G')$ of real reductive groups such that there exist  unitary highest weight representations of $G$, which are $K'$-admissible.

Let $\{e_i\}_{i=1}^n$ be a orthonormal basis for $\mathbb{R}^n$ with respect to the standard inner product, such that $\mathrm{Pin}(n)$ is defined as a subgroup contained in the Clifford algebra $\mathrm{Cl}(\mathbb{R}^n)$. Write $c=e_1e_2\cdots e_n\in\mathrm{Pin}(n)$, and it is known that $c\in\mathrm{Spin}(n)$ if and only if $n$ is even; in this case, $c$ is in the center of $\mathrm{Spin}(n)$ and some noncompact duals $\mathrm{Spin}(m,n-m)$ for $1\leq m\leq n-1$.
\begin{theorem}\label{18}
If $(G,G')$ is one of the pairs listed below, then any irreducible unitary highest weight representation of $G$ is $K'$-admissible, where $K'$ is a maximal compact subgroup of $G'$.
\begin{enumerate}[(i)]
\item $(\mathrm{E}_{7(-25)},\mathrm{SU}(3,3)\times\mathrm{U}(1)^2/\langle(e^{\frac{2\pi\sqrt{-1}}{3}}I_6,e^{-\frac{2\pi\sqrt{-1}}{3}},1),(-I_6,1,1)\rangle)$;
\item $(\mathrm{E}_{7(-25)},\mathrm{Spin}^*(6)^2\times\mathrm{U}(1)/\langle(c,c',1),(1,-1,-1)\rangle)$;
\item $(\mathrm{E}_{7(-25)},\mathrm{Spin}^*(8)\times\mathrm{SU}(1,1)\times\mathrm{SU}(2)^2/\langle(c,-I_2,I_2,I_2),(1,-I_2,-I_2,-I_2),(-1,-I_2,-I_2,I_2)\rangle)$;
\item $(\mathrm{E}_{7(-25)},\mathrm{SU}(4,2)\times\mathrm{SU}(2)\times\mathrm{U}(1)/\langle(e^{\frac{2\pi\sqrt{-1}}{3}}I_6,I_2,e^{-\frac{2\pi\sqrt{-1}}{3}}), (-I_6,-I_2,1)\rangle)$;
\item $(\mathrm{E}_{7(-25)},\mathrm{Spin}(4,2)\times\mathrm{Spin}(6)\times\mathrm{U}(1)^2/ \langle(c,c',1),(1,-1,-1)\rangle)$;
\item $(\mathrm{E}_{7(-25)},\mathrm{SU}(1,1)^3\times\mathrm{Spin}(8)/\langle(-I_2,I_2,I_2,c),(-I_2,-I_2,-I_2,1),(-I_2,-I_2,I_2,-1)\rangle)$;
\item $(\mathrm{E}_{7(-25)},\mathrm{Spin}(8,2)\times\mathrm{U}(1)^2/\langle(c,\sqrt{-1},1)\rangle)$;
\item $(\mathrm{E}_{7(-25)},\mathrm{SU}(5,1)\times\mathrm{U}(1)^2/\langle(e^{\frac{2\pi\sqrt{-1}}{3}}I_6,e^{-\frac{2\pi\sqrt{-1}}{3}},1),(-I_6,1,1)\rangle)$;
\item $(\mathrm{E}_{7(-25)},\mathrm{SU}(5,1)\times\mathrm{SU}(1,1)\times\mathrm{U}(1)/\langle(e^{\frac{2\pi\sqrt{-1}}{3}}I_6,I_2,e^{-\frac{2\pi\sqrt{-1}}{3}}),(-I_6,-I_2,1)\rangle)$;
\item $(\mathrm{E}_{7(-25)},\mathrm{Spin}^*(10)\times\mathrm{U}(1)^2/\langle(c,\sqrt{-1},1)\rangle)$.
\end{enumerate}
\end{theorem}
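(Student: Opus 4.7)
The proof plan is a direct reduction to \corollaryref{17}: each pair $(G,G')$ listed in cases (i)--(x) is meant to be $(G, G^\Gamma)$ for one of the ten Klein four symmetric pairs of holomorphic type classified in \theoremref{15} (equivalently, coming from the pairs $(\theta,\Gamma)$ in \propositionref{10}), and once this identification is in place, the $K'$-admissibility of every irreducible unitary highest weight representation of $G$ is immediate from \corollaryref{17}. The substantive work is therefore to lift the Lie-algebra-level description of $\mathfrak{g}'_0$ from \theoremref{15} to the correct global group $G'$ together with the precise central kernel displayed in the theorem.

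First I would fix $G = \mathrm{E}_{7(-25)}$ and work inside $G^{\sigma_2} \cong ((\mathrm{E}_{6(-78)}\times\mathrm{U}(1))/\langle(c', e^{2\pi\sqrt{-1}/3})\rangle)\rtimes\langle\omega\rangle$ from Section~3.2. For each $\Gamma$ in \propositionref{10}, the generators are built from the explicit matrix realizations of $x_1,\ldots,x_5 \in \mathrm{Sp}(3)\times\mathrm{Sp}(1)/\langle(-I_3,-1)\rangle$ and $y_3,y_4 \in \mathrm{Sp}(4)/\langle -I_4\rangle$ recalled in Section~3.1, together with possibly $\sigma_2$. I would compute the centralizer of $\Gamma$ in the ambient symplectic quotient as a product of classical compact groups, lift through the appropriate double cover to obtain a $\mathrm{Spin}$ factor whenever $\Gamma$ acts nontrivially on a relevant spinor representation, and multiply by the residual $\mathrm{U}(1)$ factor of $G^{\sigma_2}$. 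The noncompact real-form structure on each factor is then pinned down by \lemmaref{12}, which provides the maximal compact subalgebra of $\mathfrak{g}'_0$ and thereby distinguishes, for instance, $\mathrm{Spin}(8,2)$ from $\mathrm{Spin}^\ast(10)$ in the two cases where both candidate Lie algebras arise.

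The step I expect to be the main obstacle is the bookkeeping of central characters. The kernel displayed in each of cases (i)--(x) arises from two nested layers of identifications: the intrinsic quotient $\langle(c', e^{2\pi\sqrt{-1}/3})\rangle$ inside $G^{\sigma_2}$, and additional relations forced by how $\Gamma$ sits in the product of factors (in particular in those cases where $\Gamma$ involves an element of the form $x_i\sigma_2$ or $y_i\sigma_2$ rather than $x_i$ or $y_i$ alone). Checking that the listed central generators, and no others, are exactly the elements mapping trivially into $G^\Gamma$ requires carrying out the ten case-by-case matrix computations; however, the Lie-algebra answers from \lemmaref{11} and \lemmaref{12} together with the isomorphism-type constraints in \theoremref{15} leave only finitely many candidate global forms, so the finite verification is mechanical. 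Once the ten identifications are complete, the theorem follows at once from \corollaryref{17}.
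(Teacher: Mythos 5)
Your reduction is exactly the paper's: once each pair in (i)--(x) is identified with $(G,G^\Gamma)$ for one of the ten pairs $(\theta,\Gamma)$ of \propositionref{10}, the $K'$-admissibility follows immediately from \corollaryref{17}. Where you diverge is in how the global form of $G'$ is established. The paper does no computation at all here: it simply reads the fixed-point groups off \cite[Table 6]{HY} and notes that the $G'$ in the statement are the identity components of the $G^\Gamma$. You propose instead to re-derive these global forms from scratch. That is a legitimate alternative in principle, and it would make the theorem more self-contained, but it is a substantial undertaking that the paper deliberately avoids, and your sketch of it contains a step that is not right as stated.

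Concretely, you say you would ``compute the centralizer of $\Gamma$ in the ambient symplectic quotient,'' i.e.\ in $\mathrm{Sp}(3)\times\mathrm{Sp}(1)/\langle(-I_3,-1)\rangle$ or $\mathrm{Sp}(4)/\langle -I_4\rangle$. But those groups are already iterated fixed-point subgroups ($((\mathrm{Int}\,\mathfrak{e}_{6(-78)})^{\tau_3})^{\eta}$, respectively $(\mathrm{Int}\,\mathfrak{e}_{6(-78)})^{\tau_4}$); they only serve to \emph{name} the elements $x_i,y_j$. The group $G^\Gamma$ you need is the fixed-point set of $\Gamma$ acting on all of $\mathrm{E}_{7(-25)}$, which is far larger than the centralizer of $\Gamma$ in the small matrix group, and its identity component, fundamental group, and the precise central kernel displayed in (i)--(x) are governed by how $\Gamma$ acts on the full $\mathfrak{e}_7$, not on the subalgebra where the matrix realization lives. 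Your fallback --- that \lemmaref{11} and \lemmaref{12} determine the Lie algebra and leave only finitely many candidate global forms --- does not by itself select the correct one; pinning down the actual component group and covering data is precisely the content of \cite[Table 6]{HY}, which is what the paper cites. So either carry out that global analysis in earnest (which is a separate project) or, as the paper does, quote the table; as written, the middle step of your plan would not produce the kernels listed in the statement. One further small omission: you should note, as the paper does, that the listed $G'$ are identity components, and that admissibility survives passage to a finite-index subgroup of $K'$, so \corollaryref{17} still applies.
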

\begin{proof}
The conclusion follows from \corollaryref{17} and \cite[Table 6]{HY}, and all $G'$ are taken to be the identity components here.
\end{proof}
\begin{corollary}\label{19}
If $(G,G')$ is one of the pairs listed in \theoremref{18}, there exist an infinite dimensional irreducible unitary representation $\pi$ of $G$ and an irreducible unitary representation $\tau$ of $G'$ such that\[0<\dim_\mathbb{C}\mathrm{Hom}_{\mathfrak{g}',K'}(\tau_{K'},\pi_K|_{\mathfrak{g}'})<+\infty\]where $K$ is a maximal compact subgroup of $G$, $\mathfrak{g}$ is the complexified Lie algebra of $G$, $\pi_K|_{\mathfrak{g}'}$ is the restriction of underlying $(\mathfrak{g},K)$-module of $\pi$ to the complexified Lie algebra $\mathfrak{g}'$ of $G'$, $K'$ is a maximal compact subgroup of $G'$ such that $K'\subseteq G'\cap K$, and $\tau_{K'}$ is the underlying $(\mathfrak{g}',K')$-module of $\tau$.
\end{corollary}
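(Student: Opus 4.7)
The plan is to deduce \corollaryref{19} from \theoremref{18} by converting the $K'$-admissibility statement into a finite-multiplicity branching statement, which is exactly the content of Kobayashi's admissible-restriction theory. Concretely, for each of the ten pairs $(G,G')$ listed in \theoremref{18}, I would start by choosing $\pi$ to be an infinite-dimensional irreducible unitary highest weight representation of $G=\mathrm{E}_{7(-25)}$; such $\pi$ exist in abundance (any holomorphic discrete series representation of $\mathrm{E}_{7(-25)}$ works, and more generally any member of the Wallach set giving an infinite-dimensional highest weight module). By \theoremref{18}, this $\pi$ is $K'$-admissible.

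Next, I would invoke Kobayashi's fundamental theorem on admissible restrictions: if $\pi$ is $K'$-admissible, then the underlying Harish-Chandra module $\pi_K$, viewed as a $(\mathfrak{g}',K')$-module via $\pi_K|_{\mathfrak{g}'}$, decomposes as an algebraic direct sum of irreducible $(\mathfrak{g}',K')$-modules, each appearing with finite multiplicity, and each such irreducible summand is the underlying $(\mathfrak{g}',K')$-module of an irreducible unitary representation of $G'$. (This is the content of the references \cite{Ko3} and \cite{Ko4} already cited in the paper; in particular, it is what allowed \corollaryref{17} to be stated at the $G'$ level.) From this decomposition I would simply pick $\tau$ to be any irreducible unitary representation of $G'$ whose underlying module $\tau_{K'}$ appears as a summand. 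Then
\[0<\dim_{\mathbb{C}}\mathrm{Hom}_{\mathfrak{g}',K'}(\tau_{K'},\pi_K|_{\mathfrak{g}'})<+\infty,\]
where positivity is immediate from the choice of $\tau$ as a summand, and finiteness is part of the admissibility conclusion.

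The argument does not use anything special about which of the ten pairs is considered—the same $\pi$ and the same extraction of $\tau$ work uniformly, since \theoremref{18} already handles each pair. There is no serious obstacle: the only mild points to verify are that $\pi$ can be chosen infinite-dimensional (which is clear, since $\mathrm{E}_{7(-25)}$ admits holomorphic discrete series) and that the containment $K'\subseteq G'\cap K$ required in the statement holds, which follows from the fact that in each pair the identity component of $G'$ is constructed so that its maximal compact subgroup sits inside the maximal compact subgroup of $G$ (this is built into the holomorphic-type construction and is implicit in \propositionref{13}). Thus the proof reduces to a one-line citation of \corollaryref{17} together with the standard discrete-decomposition theorem for $K'$-admissible restrictions.
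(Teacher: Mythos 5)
Your proposal is correct and follows essentially the same route as the paper, which deduces the corollary from \theoremref{18} together with Kobayashi's admissible-restriction theorem \cite[Proposition 1.6]{Ko4}; you have merely spelled out the intermediate steps (choosing an infinite-dimensional unitary highest weight representation, invoking discrete decomposability with finite multiplicities, and extracting a summand $\tau$) that the paper leaves implicit in its one-line citation.
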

\begin{proof}
The conclusion follows from \theoremref{18} and \cite[Proposition 1.6]{Ko4}.
\end{proof}

\end{document}